\newtheorem{theorem}{Theorem}[section]
\newtheorem{proposition}[theorem]{Proposition}
\newtheorem{corollary}[theorem]{Corollary}
\newtheorem{lemma}[theorem]{Lemma}
\theoremstyle{definition}
\newtheorem{remark}[theorem]{Remark}
\newtheorem{conjecture/question}[theorem]{Conjecture/Question}
\newtheorem{question}[theorem]{Question}
\newtheorem{remark/definition}[theorem]{Remark/Definition}
\newtheorem{terminology/notation}[theorem]{Terminology/Notation}
\def\PP{{\textbf P}}
\def\OO{\mathcal{O}}
\def\cD{\mathcal{D}}
\def\F{\mathcal{F}}
\def\I{\mathcal{I}}
\def\cM{\mathcal{M}}
\def\cR{\mathcal{R}}
\def\rr{\overline{\mathcal{R}}}
\def\mm{\overline{\mathcal{M}}}
\def\dd{\overline{\mathcal{D}}}
\DeclareFontFamily{OT1}{pzc}{}
\DeclareFontShape{OT1}{pzc}{m}{it}{<-> s * [1.10] pzcmi7t}{}
\DeclareMathAlphabet{\mathpzc}{OT1}{pzc}{m}{it}
\begin{document}
\title{Secant loci on moduli of Prym varieties}

\author[G. Farkas]{Gavril Farkas}

\address{Humboldt-Universit\"at zu Berlin, Institut f\"ur Mathematik,  Unter den Linden 6
\hfill \newline\texttt{}
 \indent 10099 Berlin, Germany} \email{{\tt farkas@math.hu-berlin.de}}
\thanks{We are grateful to Alessandro Verra for many discussions related to this circle of ideas.}

\author[M. Lelli-Chiesa]{Margherita Lelli-Chiesa}
\address{Universit\`a Roma Tre, Dipartimento di Matematica, Largo San Leonardo Murialdo \hfill
 \newline \indent 1-00146 Roma, Italy}
 \email{{\tt
margherita.lellichiesa@uniroma3.it}}

\begin{abstract}
We present a Prym analogue of Lazarsfeld's result that curves on general polarized K3 surfaces verify the Brill-Noether Theorem, or equivalently, that their canonical embedding has no unexpected secants. We show that the Prym-canonical embedding of a curve on a  general Nikulin surface (both of standard and non-standard types) has no unexpected secants and explain how these two geometric facts suffice to determine the class of the difference divisor on the moduli space $\rr_g$ of stable Prym curves of (odd) genus $g$.
\end{abstract}

\maketitle

\section{introduction}

The Brill-Noether Theorem, asserting that for a general curve $C$ of genus $g$ the varieties $W^r_d(C):=\{L\in \mbox{Pic}^d(C): h^0(C,L)\geq r+1\}$ have the expected dimension $$\rho(g,r,d)=g-(r+1)(g-d+r)$$ is one of the cornerstones of the theory of algebraic curves, having multiple applications to enumerative geometry, or to the birational geometry of the moduli space $\mm_g$ of stable curves of genus $g$, see \cite{EH}. Via Riemann-Roch, the Brill-Noether Theorem can be reformulated in terms of secant varieties to the canonical linear system of $C$. More generally, given a line bundle $L\in \mbox{Pic}^d(C)$ with $h^0(C,L)=r+1$ and positive integers $0\leq f< e$, one introduces the variety 
\begin{equation}\label{eq:sec_var1}
V_e^{e-f}(L):=\bigl\{Z\in C_e: h^0(C, L(-Z))\geq r+1-e+f\bigr\}
\end{equation}
of effective divisors $Z$ of degree $e$ which fail to impose $f$ independent conditions on $|L|$. Since $V_e^{e-f}(L)$ can be represented as a degeneracy locus on the $e$-th symmetric product $C_e$ of $C$ globalizing the evaluation maps $\mathrm{ev}_Z \colon H^0(C,L) \rightarrow L_{D}$, one obtains that every irreducible component of $V_e^{e-f}(L)$ has dimension at least equal to $e-f(r+1-e+f)$. With this terminology, observing that $V_d^{d-r}(\omega_C)\cong C_d^r \rightarrow W^r_d(C)$ is generically a $\PP^r$-bundle, the Brill-Noether Theorem is equivalent to the statement that for a general curve $C$ of genus $g$, all varieties $V_e^{e-f}(\omega_C)$ have the expected. In particular, Lazarsfeld's Theorem \cite{La1} can then be formulated as saying that if $X$ is a smooth $K3$ surface with $\mbox{Pic}(X)=\mathbb Z\cdot H$, where $H^2=2g-2$, then for any irreducible curve $C\in |H|$ the secant loci of $\omega_C$ have the expected dimension. For further statements on the applications of these loci to the Green-Lazarsfeld Secant Conjecture on syzygies of curves we refer to \cite{FK}, or \cite{F}.

\vskip 4pt

Moving to the Prym setting, let $\cR_g$ be the moduli space parameterizing pairs $[C, \eta]$, where $C$ is a smooth curve of genus $g$ and $\eta\in \mbox{Pic}^0(C)$ is a $2$-torsion point. The Prym moduli space $\cR_g$ admits a Deligne-Mumford compactification $\rr_g:=\mm_g\bigl(\mathcal{B} \mathbb Z_2\bigr)$ constructed by Cornalba \cite{Cor}, whose birational geometry has been described in detail in \cite{FL}. In particular, $\rr_g$ has negative Kodaira dimension for $g\leq 9$, see \cite{Don}, \cite{FV1}, \cite{FV2}, \cite{FV3}, and it is a variety of general type for $g\geq 13$ and $g\neq 16$, see \cite{FL}, \cite{CEFS}, \cite{FJP} and \cite{Br}. For odd genus $g=2i+1$ a key role in all these results is played by the (closure in $\rr_{2i+1}$ of the) \emph{difference Prym divisor}
\begin{equation}\label{eq:diff_div}
\cD_{2i+1}:=\Bigl\{[C, \eta]\in \cR_{2i+1}: \eta\in C_i-C_i\Bigr\}.
\end{equation}
The defining condition for points in $\cD_{2i+1}$ can be reformulated as $V_i^{i-1}(\omega_C\otimes \eta)\neq \emptyset$, or equivalently, that the Prym-canonical embedding 
$\varphi_{\omega_C\otimes \eta}\colon C\hookrightarrow \PP^{2i-1}$
possesses an $i$-secant $(i-2)$-plane. In particular, $\cD_{2i+1}$ becomes the exact Prym analogue of the \emph{Hurwitz divisor} 
\begin{equation}\label{eq:hurwitz_div}
\cM_{2i+1,i+1}^1:=\Bigl\{[C]\in \cM_{2i+1}: V_{i+1}^i(\omega_C)\neq \emptyset\Bigr\}=
\Bigl\{[C]\in \cM_{2i+1}:\exists \ C\stackrel{i+1:1}\longrightarrow \PP^1 \Bigr\}
\end{equation}
defined by Harris and Mumford \cite{HM} and instrumental in showing that 
$\mm_g$ is of general type for odd $g>23$.  

\vskip 3pt

The first result we establish determines the dimension of the secant loci for the general Prym-canonical curve of genus $g$.

\begin{theorem}\label{thm:genpry}
We fix integers $0\leq f< e< g$. Then for a general Prym curve $[C, \eta]\in \cR_g$ the secant locus $V_e^{e-f}(\omega_C\otimes \eta)$ is equidimensional of dimension 
$$\mathrm{dim} \ V_e^{e-f}(\omega_C\otimes \eta)=e-f(g-1-e+f).$$
In particular, if $e-f(g-1-e+f)<0$, then $V_e^{e-f}(\omega_C\otimes \eta)=\emptyset$.
\end{theorem}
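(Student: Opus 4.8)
The plan is to deduce Theorem~\ref{thm:genpry} from the corresponding statement for curves lying on Nikulin surfaces --- the Prym analogue of Lazarsfeld's $K3$ theorem --- by a semicontinuity argument.

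Since $\eta$ is a nontrivial $2$-torsion point, Serre duality gives $h^1(C,\omega_C\otimes\eta)=h^0(C,\eta)=0$ and hence $h^0(C,\omega_C\otimes\eta)=g-1$, so that $V_e^{e-f}(\omega_C\otimes\eta)$ is the degeneracy locus on $C_e$ where the globalized evaluation map from the trivial bundle of rank $g-1$ to the rank-$e$ secant bundle of $\omega_C\otimes\eta$ has corank $\ge f$. By the lower bound recalled above, every component of $V_e^{e-f}(\omega_C\otimes\eta)$ has dimension $\ge e-f(g-1-e+f)$ (the case $f=0$ being trivial, as then $V_e^e(\omega_C\otimes\eta)=C_e$), so the content of the theorem is the reverse inequality and, when $e-f(g-1-e+f)\ge0$, non-emptiness. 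Globalizing the construction over $\cR_g$ yields a scheme $\mathcal V_e^{e-f}\to\cR_g$, proper over the base, with fibre $V_e^{e-f}(\omega_C\otimes\eta)$ over $[C,\eta]$; by upper semicontinuity of fibre dimension and properness, the set of $[C,\eta]$ with $\dim V_e^{e-f}(\omega_C\otimes\eta)\le e-f(g-1-e+f)$ is open, as is the set where this locus is empty (the complement of the closed image of $\mathcal V_e^{e-f}$). As $\cR_g$ is irreducible, it suffices to produce, for each $g$, one Prym curve for which $V_e^{e-f}(\omega_C\otimes\eta)$ has dimension $\le e-f(g-1-e+f)$, resp.\ is empty when this number is negative; equidimensionality then follows from the lower bound, while non-emptiness in the remaining range follows from a Porteous-type computation on $C_e$, exactly as in the classical existence theorems for $W^r_d$.

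The single Prym curve we need is supplied by a Nikulin surface: for every $g$ there is a Nikulin surface $X$ of genus $g$ (of standard type, say), a general member $C$ of its primitive linear system being smooth of genus $g$ and carrying a canonical \'etale double cover $\pi\colon\widetilde C\to C$ with $\pi_\ast\OO_{\widetilde C}=\OO_C\oplus\eta$; this produces $[C,\eta]\in\cR_g$, and $\omega_C\otimes\eta$ is cut out on $C$ by a natural line bundle on $X$. The theorem thereby reduces to the geometric assertion announced in the introduction, the Prym analogue of Lazarsfeld's theorem: for such a curve, $V_e^{e-f}(\omega_C\otimes\eta)$ has dimension exactly $e-f(g-1-e+f)$.

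To prove this assertion I would follow Lazarsfeld's strategy. A divisor $Z\in V_e^{e-f}(\omega_C\otimes\eta)$ pulls back under $\pi$ to an $\iota$-invariant effective divisor of degree $2e$ on $\widetilde C$ (where $\iota$ is the involution of $\widetilde C$ over $C$) whose line bundle lies in $W^{f}_{2e}(\widetilde C)$, since $h^0(\widetilde C,\OO_{\widetilde C}(\pi^\ast Z))=h^0(C,\OO_C(Z))+h^0(C,\eta\otimes\OO_C(Z))\ge f+1$; moreover $\widetilde C$ lies on the $K3$ surface $\widetilde X$ double covering $X$. One attaches to the corresponding line bundle its Lazarsfeld--Mukai bundle $E$ on $\widetilde X$ (after the usual reduction to the base-point-free case), computes its Mukai vector from the numerical data, and uses the inequality $\dim\Ext^1(E,E)\ge0$ valid for simple sheaves on a $K3$ surface to force the expected bound, treating the non-simple case by induction on the Mukai vector. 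I expect the main difficulty to be twofold: controlling the dimension of the $\iota$-invariant sublocus of $W^f_{2e}(\widetilde C)$ obtained in this way --- it may in fact be cleaner to work with Lazarsfeld--Mukai bundles directly on $X$ rather than on $\widetilde X$ --- and carrying out the case analysis for non-simple bundles separately over the standard and the non-standard Nikulin lattices.
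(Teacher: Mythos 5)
Your overall strategy is genuinely different from the paper's: you propose to reduce the generic statement to a single Prym curve on a Nikulin surface via semicontinuity, whereas the paper proves Theorem~\ref{thm:genpry} by pure degeneration, with no Nikulin surfaces at all --- first an emptiness statement for flag curves with elliptic tails carrying $2$-torsion, via Eisenbud--Harris limit linear series and a dimension count (Theorem~\ref{thm:0nonexistence}), then a descending induction in which a hypothetical excess-dimensional component is cut down by the ample divisor $p+Y_{e-1}\subseteq Y_e$ on the symmetric product until the expected dimension becomes negative. Your semicontinuity set-up is correct as far as it goes (upper semicontinuity of fibre dimension over the irreducible $\cR_g$ does reduce the upper bound to exhibiting one good curve, and the non-emptiness in the range $e-f(g-1-e+f)\geq 0$ does follow from the existence theorem for secant loci of non-special line bundles, as in \cite[p.~356]{ACGH}). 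But there are two genuine gaps in the rest.

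First, the Nikulin input you invoke is stronger than what is available. Theorem~\ref{thm:disjoint1} (proved as Theorem~\ref{thm:non-existence}) asserts only that $V_e^{e-f}(\omega_C\otimes\mathfrak{e}_C)=\emptyset$ when the expected dimension is negative; it says nothing about the dimension of $V_e^{e-f}$ on the Nikulin curve when the expected dimension is non-negative, which is exactly what your reduction requires. Passing from ``emptiness when negative'' to ``correct dimension when non-negative'' is itself a non-trivial step: if you try the standard trick of intersecting an excess component with the ample divisors $x+C_{e-1}$, you end up needing emptiness of secant loci of the \emph{twisted} bundles $\omega_C\otimes\eta(-x_1-\cdots-x_k)$, a different statement that is not covered by Theorem~\ref{thm:disjoint1}; the paper achieves this bridging only by degenerating the curve itself. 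Second, your route to the Nikulin statement through $\pi^*Z\in W^f_{2e}(\widetilde C)$ requires proving Brill--Noether emptiness for $\widetilde C$, a curve of genus $2g-1$ lying on a $K3$ surface of Picard number at least $9$. This is a far more delicate assertion than the one you need: for $f=1$, $e=i$, $g=2i+1$ it amounts to $W^1_{2i}(\widetilde C)=\emptyset$, i.e.\ essentially to the maximal gonality of $\widetilde C$, which is closely tied to the question about $\mathrm{gon}(\widetilde C)$ that the paper explicitly leaves open in its final section, and curves on $K3$ surfaces with large Picard lattices are frequently Brill--Noether special (indeed, Section~4 of the paper exploits a hyperelliptic specialization of exactly these curves). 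The paper's actual argument for Theorem~\ref{thm:non-existence} sidesteps the double cover entirely: from $Z\in V_e^{e-f}(\omega_C\otimes\mathfrak{e}_C)$ it produces an auxiliary smooth curve $D\in|L\otimes\mathfrak{e}^{\vee}|$ of genus $g-2$ through $Z$ with $\OO_D(Z)\in W^f_e(D)$ and $\rho(g-2,f,e)<0$, and rules this out by Lazarsfeld's theorem applied to $D$ on $X$ itself, using that $L\otimes\mathfrak{e}^{\vee}$ admits no decomposition into two moving classes in the standard Nikulin lattice. If you want to pursue your programme, that is the reduction to aim for; as written, the Lazarsfeld--Mukai analysis on the covering $K3$ is a sketch whose hardest points (the $\iota$-invariant sublocus, the non-simple bundles over a rank-$10$ lattice, and the small-genus cases excluded by the $g\geq 6$ hypothesis in Theorem~\ref{thm:non-existence}) are precisely the ones left open.
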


The proof of Theorem \ref{thm:genpry} uses degeneration and is presented in Section \ref{sec:degprym}. In what follows, we discuss a more refined version of Theorem \ref{thm:genpry} for Prym curves on a special class of $K3$ surfaces.

\vskip 3pt

\subsection{Nikulin surfaces and Prym varieties.}  Lazarsfeld's result \cite{La1} provides explicit examples of smooth curves lying on general $K3$ surfaces which satisfy the Brill-Noether Theorem, in particular, which lie outside the Hurwitz 
divisor $\mm_{2i+1,i+1}^1$. This geometric condition alone is sufficient to determine the slope $s\bigl([\mm_{2i+1,i+1}^1]\bigr)=6+\frac{12}{g+1}$ of the Hurwitz divisor. We show that in Prym setting, one has a similar result for curves lying on Nikulin surfaces and, in the process, we obtain a more precise version of Theorem \ref{thm:genpry}.

\vskip 4pt

A \emph{Nikulin  surface} is a $K3$ surface $X$ endowed with a  double cover
$$f\colon \tilde X \longrightarrow X $$
branched over $8$ disjoint smooth rational curves $N_1, \ldots, N_8\subseteq X$. Blowing down the $(-1)$-curves $f^{-1}(N_j)$, one obtains a minimal $K3$ surface 
$\sigma\colon \tilde{X}\rightarrow Y$, together with an involution $\iota\in \mbox{Aut}(Y)$ having $8$ fixed points corresponding to the images of the curves $\sigma(f^{-1}(N_j))$. The class $\mathcal O_X(N_1+\cdots+N_8)$ is divisible by $2$ and we set $\mathfrak{e} \in \mbox{Pic}(X)$ to be the class characterized by $\mathfrak{e}^{\otimes 2}\cong \OO_X(N_1+\cdots+N_8)$. 

\vskip 4pt

Let $\mathfrak{N}$ denote the Nikulin lattice defined as the even lattice of rank $8$ generated by the classes of $N_1, \ldots, N_8$ and by $\mathfrak{e}$. A \emph{polarized Nikulin surface} of genus $g$ is a $K3$ surface $X$ together with an embedding $\mathbb Z\cdot L\oplus_{\perp} \mathfrak{N}\hookrightarrow \mbox{Pic}(X)$, such that $L$ is a big and nef class on $X$, with $L^2=2g-2$. If this embedding is primitive, we say that $X$ is a \emph{standard} Nikulin surface, else we say that $X$ is \emph{non-standard}\footnote{Occasionally standard (respectively non-standard) Nikulin surfaces are referred to as being of the \emph{first} (respectively of the \emph{second} kind), see \cite{FV1}, \cite{FV2}.}.
There are two irreducible $11$-dimensional moduli spaces $\F_g^{\mathfrak{N}}$ respectively $\mathcal{F}_g^{\mathfrak{N}, \mathrm{ns}}$ of standard (respectively non-standard) polarized Nikulin surfaces of genus $g$, see \cite{Dol}, \cite{vGS} for details. Over $\F_g^{\mathfrak{N}}$ we consider the $\PP^g$-bundle of pairs
$$\mathcal{P}_g^{\mathfrak{N}}:=\Bigl\{(X, C): C\subseteq X \mbox{ is a smooth curve such that }\  \bigl[X, \mathfrak{N}\oplus_{\perp} \OO_X(C)\bigr]\in \F_g^{\mathfrak{N}}\Bigr\},$$
together with forgetful maps
$$\xymatrix{
  & \mathcal{P}_g^{\mathfrak{N}} \ar[dl]_{p_g} \ar[dr]^{\chi_g} & \\
   \F_g^{\mathfrak{N}} & & \cR_{g}       \\
                 }$$
where $p_g\bigl([X,C]\bigr):=[X, \OO_X(C)]$ and $\chi_g\bigl([X,C]\bigr):=[C, \mathfrak{e}_C^{\vee}:=
\mathfrak{e}^{\vee}\otimes \OO_C]$. In particular, $\chi_g$ can be used to find a uniform parametrization of $\cR_g$ for small $g$, see \cite[Theorem 0.2]{FV1}. Keeping this notation throughout, we can now state our next result:

\begin{theorem}\label{thm:disjoint1}
Let $[X, \mathfrak{N}\oplus_{\perp} \mathbb Z\cdot L]\in \F_g^{\mathfrak{N}}$ be a general standard Nikulin surface of genus $g$. Then for every irreducible curve $C\in |L|$, we have that 
 $V_e^{e-f}(\omega_C\otimes \mathfrak{e}_C)=\emptyset$, whenever $e-f(g-1-e+f)<0$. In particular, for odd genus $g=2i+1$, we have that $\mathfrak{e}_C\notin C_i-C_i$. 
\end{theorem}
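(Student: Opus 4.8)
The plan is to prove Theorem \ref{thm:disjoint1} by a Lazarsfeld-style vector bundle argument adapted to the Nikulin setting. Given an irreducible curve $C \in |L|$ on a general standard Nikulin surface $X$ and a nonempty secant locus $V_e^{e-f}(\omega_C \otimes \mathfrak{e}_C)$ — corresponding to a divisor $Z \in C_e$ imposing only $e - f$ conditions on $|\omega_C \otimes \mathfrak{e}_C|$ — I would produce from this data a torsion-free sheaf on $X$ whose existence contradicts the genericity of $[X, \mathfrak{N} \oplus_\perp \mathbb{Z}\cdot L]$. The key observation is that on a Nikulin surface one has the extra line bundle $\mathfrak{e}$ with $\mathfrak{e}_C = \mathfrak{e}^\vee \otimes \OO_C$, so the Prym-canonical bundle $\omega_C \otimes \mathfrak{e}_C$ is the restriction to $C$ of $\OO_X(C) \otimes \mathfrak{e}^\vee$ by adjunction (up to the usual twist). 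A section of $\omega_C \otimes \mathfrak{e}_C(-Z)$ that still imposes few conditions gives, after the standard Lazarsfeld construction, a bundle $E$ (or its dual $F$) sitting in an exact sequence $0 \to \OO_X \to E \to I_Z(C) \otimes \mathfrak{e}^\vee \to 0$ or a closely related one, and the failure of $Z$ to impose independent conditions forces $E$ to be non-simple, or forces a destabilizing sub/quotient sheaf.

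The heart of the argument is then a lattice-theoretic restriction. On a \emph{general} standard Nikulin surface $X$, the Picard lattice is exactly $\mathbb{Z}\cdot L \oplus_\perp \mathfrak{N}$, so any line bundle class appearing as the determinant of a destabilizing subsheaf of $E$ must be of the form $aL + \sum b_j N_j + c\mathfrak{e}$ with integer (or half-integer, tracked carefully through $\mathfrak{N}$) coefficients. I would run the Bogomolov-type inequality / Hodge index argument: the second Chern class and the rank of $E$ are pinned down by $e$, $f$, and $g$, and one computes $\chi(E \otimes E^\vee)$ or $\dim \Ext^1$; the genericity of $X$ (its Picard group contains no classes of small square beyond those listed, and in particular no effective $(-2)$-curves meeting $L$ in the forbidden way) rules out every candidate destabilizing class. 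This is exactly the place where the numerical hypothesis $e - f(g-1-e+f) < 0$ enters: it is what makes the putative $E$ violate the inequality that a non-general-position-secant would require, just as in Lazarsfeld's original proof for $\omega_C$. The role of the Nikulin structure is that the allowed sublattice is small and explicit, so the case analysis is finite.

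The main obstacle I expect is bookkeeping the twist by $\mathfrak{e}$ correctly and ensuring the Lazarsfeld bundle remains globally generated / that the construction does not degenerate when $C$ is singular or when $Z$ is non-reduced or meets the nodes of $C$. Concretely: (i) one must check that $\omega_C \otimes \mathfrak{e}_C$ and its twists down by $Z$ are nonspecial enough to carry out the evaluation-map construction on $X$ rather than merely on $C$; (ii) the sheaf $E$ one builds is a priori only torsion-free, not locally free, so the Hodge index estimate must be run for torsion-free sheaves (reflexive hull, or work with $\det$ and $c_2$ directly); and (iii) one needs the genericity statement for $\F_g^{\mathfrak{N}}$ in the precise form ``$\Pic(X) \cong \mathbb{Z}\cdot L \oplus_\perp \mathfrak{N}$ and the only $(-2)$-classes are $\pm N_j$,'' which should follow from the fact that $\F_g^{\mathfrak{N}}$ is irreducible of dimension $11$ together with a Noether-Lefschetz-type argument (or can be quoted from \cite{Dol}, \cite{vGS}). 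Once the lattice is controlled, the contradiction and hence the emptiness of $V_e^{e-f}(\omega_C \otimes \mathfrak{e}_C)$ — and in the odd-genus case $g = 2i+1$ with $e = i$, $f = 1$, giving $\mathfrak{e}_C \notin C_i - C_i$ — follows formally.
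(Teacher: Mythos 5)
Your overall philosophy (reduce to a vector-bundle/lattice statement on $X$, then use that a general member of $\F_g^{\mathfrak{N}}$ has Picard lattice exactly $\mathbb Z\cdot L\oplus_{\perp}\mathfrak{N}$) is the right one, and your final step --- ruling out destabilizing/decomposing classes by an explicit finite case analysis in the Nikulin lattice --- is essentially what the paper does at the very end. The adjunction identity $\omega_C\otimes\mathfrak{e}_C\cong (L\otimes\mathfrak{e}^{\vee})|_C$ is also the correct starting point. But the core of your argument is not actually specified, and the most natural reading of it does not work. You propose to build a sheaf $E$ from $Z$ via an extension of the shape $0\to\OO_X\to E\to \I_{Z/X}\otimes L\otimes\mathfrak{e}^{\vee}\to 0$ and to derive a contradiction from Bogomolov/Hodge-index. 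That is a rank-two Serre construction whose discriminant $c_1^2-4c_2=2g-6-4e$ depends only on $e$ and $g$: the parameter $f$, i.e.\ the \emph{amount} by which $Z$ fails to impose independent conditions, never enters, so the hypothesis $e-f(g-1-e+f)<0$ cannot be the inequality that makes this bundle violate anything. You acknowledge this is "the place where the numerical hypothesis enters" but give no mechanism; for Brill--Noether loci the mechanism is $\chi(E\otimes E^{\vee})=2-2\rho$ for the Lazarsfeld--Mukai bundle, and there is no analogous off-the-shelf identity for secant loci. This is a genuine gap, not a bookkeeping issue.

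What the paper does instead, and what your plan is missing, is an intermediate geometric reduction that converts the secant problem into a Brill--Noether problem where Lazarsfeld's machinery applies verbatim. From $h^0(C,\omega_C\otimes\mathfrak{e}_C(-Z))\geq g-1-e+f$ and the ideal-sheaf sequence one gets $h^0(X,\I_{Z/X}\otimes L\otimes\mathfrak{e}^{\vee})\geq g-1-e+f>0$; since $H:=L\otimes\mathfrak{e}^{\vee}$ is very ample for $g\geq 6$ by \cite{GS}, one chooses a \emph{smooth auxiliary curve} $D\in|H|$ of genus $g-2$ passing through $Z$, and a second cohomology computation shows $\OO_D(Z)\in W^f_e(D)$. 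The point of the construction is the numerical coincidence $\rho(g-2,f,e)=e-f(g-1-e+f)$, which is exactly the expected dimension of the secant locus; so the hypothesis says $D$ is Brill--Noether special, and \emph{that} is what is ruled out by Lazarsfeld's argument \cite{La1} together with the lattice computation (no decomposition $H=A_1+A_2$ with $h^0(A_i)\geq 2$, since any summand with $L$-coefficient zero lies in $\mathfrak{N}$ and moves in at most a zero-dimensional system). If you want to salvage a direct bundle argument on $X$ without passing through $D$, you would in effect have to build the Lazarsfeld--Mukai bundle of $\omega_D(-Z)$ anyway, so the auxiliary curve is not avoidable; I recommend restructuring your proof around it.
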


Theorem \ref{thm:disjoint1} thus provides the first examples of explicit smooth Prym curves of odd genus which are generic from the point of view of the Bril-Noether theory of the Prym-canonical system.

\vskip 4pt

If standard Nikulin surfaces are instrumental in parametrizing 
$\rr_g$, as explained in \cite{KLV1}, non-standard Nikulin surfaces can be used to parametrize the boundary divisor $\Delta_0^{\mathrm{ram}}$ of $\rr_g$ for low $g$. Precisely, if $\Delta_0$ is the boundary divisor of $\mm_g$ parametrizing irreducible nodal curves of genus $g$ and $\pi\colon \rr_g\rightarrow \mm_g$ is the forgetful morphism dropping the Prym structure, then 
$$\pi^*(\Delta_0)=\Delta_0'+\Delta_0''+2\Delta_0^{\mathrm{ram}},$$
see \cite[1]{FL}. In particular $\Delta_0^{\mathrm{ram}}$ is the ramification divisor of $\pi$ and its general point corresponds to a Prym curve $[X=C\cup_{\{x,y\}} \PP^1, \eta]\in \rr_g$, where $C$ is a smooth curve of genus $g-1$ meeting a smooth rational component $\PP^1$ at two points $x, y$, whereas $\eta\in \mbox{Pic}^0(X)$ satisfies $\eta_{\PP^1}\cong \OO_{\PP^1}(1)$ and $\eta_C^{\otimes 2}\cong \OO_C(-x-y)$. In other words, the line bundle $\eta_C$ induces a degree $2$ cover of $C$ branched over $x$ and $y$. 

\vskip 4pt

Non-standard Nikulin surfaces appear only for odd genus. If $X$ is a non-standard Nikulin surface of genus $g\equiv 3 \mbox{ mod } 4$, then following \cite[Proposition 2.2]{vGS}, one can show that up to a permutation of the $(-2)$-curves $N_1, \ldots, N_8$, there exist classes $R, R'$ such that 
\begin{equation}\label{eq:R_and_R'}
R \equiv \frac{L-N_1-N_2}{2}, \ \  \   \mbox{  } \ \ R'\equiv \frac{L-N_3-\cdots-N_8}{2}\in \mbox{Pic}(X).
\end{equation}
Moreover, for a general point of $\F_g^{\mathfrak{N}, \mathrm{ns}}$, one has that $\mbox{Pic}(X)\cong \mathbb Z\cdot R\oplus \mathfrak{N}$.

\vskip 4pt

We now set $L^2=16i-4$, therefore curves in the linear system $|L|$ have genus $8i-1$. Using (\ref{eq:R_and_R'}) a smooth curve $C\in |R|$ has genus $g= 2i$. Moreover $C\cdot N_1=C\cdot N_2=1$, while $C\cdot N_i=0$ for $i=3, \ldots, 8$. It follows that $f_{|f^{-1}(C)}\colon f^{-1}(C)\rightarrow C$ is a double cover ramified over two points $x_1=C\cdot N_1$ and $x_2=C\cdot N_2$.
Equivalently,
$$\xi(C):=\bigl[C\cup_{\{x_1, x_2\}} \PP^1,\ \eta_C:=\mathfrak{e}_C^{\vee}, \   \eta_{\PP^1}\cong \OO_{\PP^1}(1)\bigr]\in \Delta_0^{\mathrm{ram}}\subseteq \rr_{2i+1}.$$
One obtains a rational map $\xi\colon |R| \dashrightarrow \rr_{2i+1}$
and for a Lefschetz pencil $\Xi \cong \PP^1\subseteq |R|$, a corresponding pencil of Prym curves $\Xi^{\mathrm{ns}}:=\xi(\Xi)$ of genus $2i+1$. 

\vskip 4pt

We present an analogue of Theorem \ref{thm:disjoint1} in the setting of non-standard Nikulin surfaces.

\begin{theorem}\label{thm:disjoint2}
Let $[X, \mathfrak{N}\oplus_{\perp} \mathbb Z\cdot L]\in \F_{8i-1}^{\mathfrak{N}, \mathrm{ns}}$ be a general non-standard Nikulin surface of genus $8i-1$. Then for every irreducible nodal curve $C\in |R|$, we have that $\xi\bigl([C]\bigr)\notin \overline{\cD}_{2i+1}$.  
\end{theorem}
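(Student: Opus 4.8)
The plan is to translate the condition ``$\xi([C])\in\overline{\cD}_{2i+1}$'' into a secant --- equivalently Brill--Noether --- statement about the curve $C$ sitting on $X$, and then to exclude it by a Lazarsfeld--Mukai argument in the spirit of \cite{La1}, exploiting that $\Pic(X)\cong\mathbb Z\cdot R\oplus\mathfrak N$ for a general non-standard Nikulin surface. To start, I would identify the twisted Prym-canonical bundle carried by the component $C$ of $X_0:=C\cup_{\{x_1,x_2\}}\PP^1$. With $\eta$ the Prym structure of $\xi([C])$, so that $\eta_C=\mathfrak e_C^{\vee}$ and $\eta_{\PP^1}\cong\OO_{\PP^1}(1)$, adjunction gives $\omega_C\cong\OO_C(R)$, while $\OO_C(x_1+x_2)=\OO_C(N_1+N_2)=\mathfrak e_C^{\otimes 2}$ because $C\cdot N_1=C\cdot N_2=1$, $C$ is disjoint from $N_3,\dots,N_8$ (hence $\OO_C(N_j)\cong\OO_C$ for $j\geq 3$), and $\mathfrak e^{\otimes 2}\cong\OO_X(N_1+\cdots+N_8)$. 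Therefore $(\omega_{X_0}\otimes\eta)|_C\cong\omega_C(x_1+x_2)\otimes\mathfrak e_C^{\vee}\cong\omega_C\otimes\mathfrak e_C\cong\OO_C(R')$, a line bundle of degree $R\cdot R'=L^2/4=4i-1$ with $h^0(C,\OO_C(R'))=2i$, whereas $(\omega_{X_0}\otimes\eta)|_{\PP^1}\cong\OO_{\PP^1}(1)$, so the rational bridge is mapped to a line in $\PP^{2i-1}$.

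The next step is to describe $\overline{\cD}_{2i+1}\cap\Delta_0^{\mathrm{ram}}$ near $\xi([C])$. Degenerating the defining condition $V_i^{i-1}(\omega_{D_t}\otimes\eta_t)\neq\emptyset$ of $\cD_{2i+1}$ (for the general fibre $[D_t,\eta_t]$) along a one-parameter smoothing of $\xi([C])$ --- via limit linear series, or equivalently admissible Prym covers --- one expects that only finitely many configurations of a limiting $i$-secant $(i-2)$-plane occur: a limit branch entering the bridge $\PP^1$, which is mapped to a line, absorbs at most two of the $i$ points, and recording the incidence of the remaining branches with $\{x_1,x_2\}$ together with the node twists in the refined limit linear series reduces the problem to showing that $\xi([C])\in\overline{\cD}_{2i+1}$ would force $V_e^{e-f}(\OO_X(D)|_C)\neq\emptyset$ for some $D\in\{R',\,R'-N_1,\,R'-N_2,\,R'-N_1-N_2\}$ and some $0\leq f<e$ for which $e-f\bigl(h^0(\OO_X(D)|_C)-1-e+f\bigr)<0$. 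For instance the case $D=R'$, $(e,f)=(i,1)$ reads $\mathfrak e_C^{\vee}\in C_{i-1}-C_i$, an unexpected relation on a curve of genus $2i$. Making this degeneration analysis rigorous --- pinning down precisely which configurations of the limit secant divisor occur and controlling the contribution of the bridge $\PP^1$ and of the nodes inside the compactified Picard scheme --- is the step I expect to be the main obstacle.

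Finally, for each class $D$ above I would exclude the secant condition by a vector bundle argument on the $K3$ surface $X$, just as in \cite{La1} and in the proof of Theorem \ref{thm:disjoint1}. Suppose for contradiction that $Z\in V_e^{e-f}(\OO_X(D)|_C)\neq\emptyset$ and put $A:=\OO_X(D)|_C(-Z)$; the complete linear series $|A|$ is a $g^{r'}_{d'}$ on the genus-$2i$ curve $C$ with Brill--Noether number $\rho:=\rho(2i,r',d')<0$, and after removing its base locus one attaches to it the Lazarsfeld--Mukai bundle $E$ on $X$, a rank-two bundle with $c_1(E)=\OO_X(C)=R$, $c_2(E)=d'$ and $\chi(X,E^{\vee}\otimes E)=2-2\rho$, which exceeds $2$ as $\rho<0$. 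By Serre duality on $X$ this forces $\dim\Hom(E,E)\geq 2$, so $E$ is not simple; a Bogomolov-type destabilization then produces saturated line subbundles of $E$ whose classes --- in view of the intersection numbers $N_j^2=-2$, $N_j\cdot N_k=0$ $(j\neq k)$, $R\cdot N_1=R\cdot N_2=1$ and $R\cdot N_j=0$ $(j\geq 3)$ on the general non-standard Nikulin surface --- cannot exist in $\Pic(X)\cong\mathbb Z\cdot R\oplus\mathfrak N$. This contradiction rules out every secant locus produced in the previous step, whence $\xi([C])\notin\overline{\cD}_{2i+1}$ for smooth $C\in|R|$. For an irreducible nodal $C\in|R|$ all the relevant cohomology is still computed on the fixed surface $X$, so the same argument applies after pulling the line bundles back to the normalization of $C$ at its node; alternatively one invokes that $\overline{\cD}_{2i+1}$ is closed and that $\xi$ extends across such $C$.
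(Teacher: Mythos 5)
Your strategy --- degenerate the $i$-secant condition to the boundary point $\xi([C])\in\Delta_0^{\mathrm{ram}}$ via limit linear series and then kill the resulting conditions on $C$ by Lazarsfeld--Mukai bundles on $X$ --- is not the route the paper takes, and it contains two genuine gaps.

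First, the step you yourself flag as the main obstacle is unresolved and is harder than you suggest. The curve $Y=C\cup_{\{x_1,x_2\}}\PP^1$ is \emph{not} of compact type (its dual graph contains a cycle), so Eisenbud--Harris limit linear series do not apply; the limit of the data $\bigl(Z_t,\eta_t(Z_t)\bigr)$ along a smoothing lives in a compactified Picard scheme and involves torsion-free sheaves that need not be locally free at the two nodes. Your list of candidate limit bundles $R',R'-N_1,R'-N_2,R'-N_1-N_2$ is a guess whose exhaustiveness is exactly the content of such an analysis, and nothing in the proposal establishes it. The paper sidesteps this entirely: by the Farkas--Musta\c{t}\u{a}--Popa theorem, $C_i-C_i$ coincides with the Raynaud theta divisor $\Theta_{\bigwedge^i Q_{\omega_C}}$, and the condition $H^0\bigl(Y,\bigwedge^i Q_{\omega_Y}\otimes\eta\bigr)\neq 0$ makes sense for every stable curve with $\omega_Y$ globally generated and cuts out a divisor containing $\dd_{2i+1}$. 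One therefore only has to prove a single vanishing on $Y$, which a Mayer--Vietoris sequence and the exact sequence $0\to M_{\omega_C}\to M_{\omega_C(x_1+x_2)}\to\OO_C(-x_1-x_2)\to 0$ reduce to two vanishings on $C$; these are checked not by Lazarsfeld--Mukai bundles but by specializing to a hyperelliptic non-standard Nikulin surface, where $M_{\omega_C}\cong(E_C^{\vee})^{\oplus(2i-1)}$ splits and everything becomes explicit lattice arithmetic on $\mathrm{Pic}(X')$ (Lemma \ref{ref:van_nik3}).

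Second, your concluding step rests on a false computation. In your main case $D=R'$, $(e,f)=(i,1)$, a divisor $Z\in V_i^{i-1}(\omega_C\otimes\mathfrak{e}_C)$ on the genus-$2i$ curve $C$ gives $A=\omega_C\otimes\mathfrak{e}_C(-Z)$ of degree $3i-1$ with $h^0(C,A)\geq i+1$, hence $\rho(2i,i,3i-1)=2i-(i+1)\cdot 1=i-1\geq 0$. The Brill--Noether number of $A$ on $C$ is \emph{not} negative even though the secant locus has expected dimension $-1$: this is the defining feature of Prym-secant loci, which are not Brill--Noether conditions on $C$ itself because of the twist by the nontrivial $\mathfrak{e}_C$. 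Consequently the non-simplicity of the Lazarsfeld--Mukai bundle of $(C,A)$ yields no contradiction. To make an argument of this type work one must first transfer the condition to an auxiliary curve of smaller genus, as in the proof of Theorem \ref{thm:non-existence}, where $Z$ is placed on a curve in $|L\otimes\mathfrak{e}^{\vee}|$ on which the induced linear series does have negative Brill--Noether number; your proposal omits this essential transfer.
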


Thus, also non-standard Nikulin surfaces provide explicit examples of Prym curves that are general from the point of view of Prym-Brill-Noether theory. Theorems \ref{thm:disjoint1} and \ref{thm:disjoint2} provide two explicit pencils of Prym curves on Nikulin surfaces which are disjoint from the closure $\dd_{2i+1}$ of the difference Prym divisor in $\rr_{2i+1}$. These two properties uniquely determine the slope of $[\dd_{2i+1}]\in \mbox{Pic}(\rr_{2i+1})$, as we shall explain.  

\vskip 3pt

We denote by $\lambda\in \mbox{Pic}(\rr_g)$ the Hodge class and by $\delta_0':=[\Delta_0'], \ \delta_0'':=[\Delta_0'']$ and respectively $\delta_0^{\mathrm{ram}}:=[\Delta_0^{\mathrm{ram}}]$ the corresponding boundary divisor classes, see also \cite[1]{FL} for background on the Picard group of $\rr_{g}$. Then if 
$\Xi^{\mathrm{s}}\subseteq \rr_{2i+1}$ and $\Xi^{\mathrm{ns}}\subseteq \rr_{2i+1}$ denote the pencils associated to a standard (respectively non-standard) Nikulin surface of genus $2i+1$ (respectively $8i-1$), then we have the following relations (see Proposition \ref{prop:intnumb}): 

\begin{align*}
\Xi^{\mathrm{s}}\cdot \lambda=2i+2, \  \  \Xi^{\mathrm{s}}\cdot \delta_0'=12i+8, \  \ \ \Xi^{\mathrm{s}}\cdot \delta_0''=0\   \ \mbox{ and } \  \Xi^{\mathrm{s}}\cdot \delta_0^{\mathrm{ram}}=8,\\
\Xi^{\mathrm{ns}}\cdot \lambda= 2i+1, \  \ \Xi^{\mathrm{ns}}\cdot \delta_0'=12i+6, \ \ \  \Xi^{\mathrm{ns}}\cdot \delta_0''=0 \ \mbox{ and  }\ \Xi^{\mathrm{ns}}\cdot \delta_0^{\mathrm{ram}}=4.
\end{align*}
Imposing the condition that the intersection numbers of $[\dd_{2i+1}]$ with both pencils $\Xi^{\mathrm{s}}$ and $\Xi^{\mathrm{ns}}$ are equal to zero (which amounts to a slight refinement of both Theorems \ref{thm:disjoint1} and \ref{thm:disjoint2}), we rederive one of the main results of \cite{FL}:

\begin{corollary}\label{cor:class}
The class $[\dd_{2i+2}]$ is given up to a positive constant by the following formula:
$$[\dd_{2i+2}]=c\Bigl((3i+1)\lambda-\frac{i}{2}\bigl(\delta_0'+\delta_0''\bigr)-\frac{2i+1}{4}\delta_0^{\mathrm{ram}}-\cdots\Bigr)\in \mathrm{Pic}(\rr_{2i+1}).$$
\end{corollary}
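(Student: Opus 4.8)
The plan is to determine $[\dd_{2i+1}]$ (written $[\dd_{2i+2}]$ in the statement, presumably a typo) by the standard divisor-class recognition method: first pin down the ``numerical'' part of the class using known generators and one structural input, and then use the two pencils to fix the remaining coefficients. Recall that $\Pic(\rr_{2i+1})\otimes\QQ$ is freely generated by $\lambda$, $\delta_0'$, $\delta_0''$, $\delta_0^{\mathrm{ram}}$ together with the classes $\delta_i^{(\cdot)}$ of the codimension-one boundary strata supported over $\Delta_i\subseteq\mm_g$ for $i\geq 1$ (the ``$\cdots$'' in the formula). Since $\dd_{2i+1}$ is an effective divisor, its closure does not contain any of the boundary divisors $\Delta_i$ with $i\geq 1$ in its support, so writing $[\dd_{2i+1}]=a\lambda-b_0'\delta_0'-b_0''\delta_0''-b_0^{\mathrm{ram}}\delta_0^{\mathrm{ram}}-\sum_{i\geq 1}(\cdots)$, the first task is to compute $a$, $b_0'$, $b_0''$, $b_0^{\mathrm{ram}}$; the higher boundary coefficients are not needed for the slope and are absorbed into the ``$\cdots$''.

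First I would use the pushforward description $\dd_{2i+1}\to\mm_g$, or rather the fact that $\dd_{2i+1}$ is (up to the $\cdots$ terms over higher boundary) the degeneracy locus of a map of vector bundles globalizing the evaluation maps $H^0(C,\omega_C\otimes\eta)\to(\omega_C\otimes\eta)_Z$ over the relative symmetric product $\mathcal{C}_i\to\rr_{2i+1}$. One computes the class of $V_i^{i-1}(\omega_C\otimes\eta)$ inside $\mathcal{C}_i$ by a Thom--Porteous / Grothendieck--Riemann--Roch calculation (this is the technical heart of \cite{FL} and I would simply invoke it), then pushes forward to $\rr_{2i+1}$ using the known Chern classes of the relative symmetric product and of the Prym-canonical bundle. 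This determines the coefficient of $\lambda$ to be $3i+1$ (hence $c$-normalized) and the coefficients of $\delta_0'$, $\delta_0''$ up to the overall constant; the $\delta_0'$ and $\delta_0''$ coefficients are forced to be equal by the symmetry of the Prym structure under the involution exchanging the two half-torsion sheets, or more simply by a direct boundary computation showing both equal $-i/2$.

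With the ansatz $[\dd_{2i+1}]=c\bigl((3i+1)\lambda-\tfrac{i}{2}(\delta_0'+\delta_0'')-x\,\delta_0^{\mathrm{ram}}-\cdots\bigr)$ in hand, the remaining unknown is $x$ (the $\cdots$ coefficients drop out of both pencil intersections, since $\Xi^{\mathrm{s}}$ and $\Xi^{\mathrm{ns}}$ lie in the interior relative to $\Delta_i$, $i\geq1$, i.e. meet only $\Delta_0'$, $\Delta_0''=\emptyset\cap\Xi$, $\Delta_0^{\mathrm{ram}}$). Theorems \ref{thm:disjoint1} and \ref{thm:disjoint2}, slightly refined to say that the pencils are disjoint from $\dd_{2i+1}$ \emph{as cycles} (i.e. with zero intersection number, which follows since they avoid the divisor entirely and the ambient space is smooth along them, or one argues via a transversality/excess-intersection bound), give $\Xi^{\mathrm{s}}\cdot[\dd_{2i+1}]=0$ and $\Xi^{\mathrm{ns}}\cdot[\dd_{2i+1}]=0$. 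Plugging in the intersection numbers from Proposition \ref{prop:intnumb}, the first equation reads
\begin{equation*}
(3i+1)(2i+2)-\tfrac{i}{2}(12i+8)-8x=0,
\end{equation*}
and the second reads
\begin{equation*}
(3i+1)(2i+1)-\tfrac{i}{2}(12i+6)-4x=0.
\end{equation*}
Each solves to $x=\tfrac{2i+1}{4}$, and the fact that the \emph{two} independent equations give the \emph{same} value of $x$ is exactly the consistency check that makes the argument work (it also re-confirms the coefficients of $\lambda$ and $\delta_0'+\delta_0''$ that were fed in). This yields the stated formula.

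The main obstacle I anticipate is not the final linear algebra but the bookkeeping in the Porteous/GRR computation of the leading coefficients $(3i+1)\lambda-\tfrac{i}{2}(\delta_0'+\delta_0'')$ over the relative symmetric product, together with the careful verification that the closure $\dd_{2i+1}$ does not acquire boundary components over $\Delta_0'$, $\Delta_0''$, $\Delta_0^{\mathrm{ram}}$ with the ``wrong'' multiplicities — i.e., controlling the limit linear series / admissible covers behavior of the condition $\eta\in C_i-C_i$ at the boundary, so that the $\delta_0$-type coefficients read off the degeneracy locus computation are the correct ones and not corrected by an excess-dimensional boundary contribution. Since this is precisely the content already worked out in \cite{FL}, I would cite it and present only the new input: that the two Nikulin pencils, by Theorems \ref{thm:disjoint1} and \ref{thm:disjoint2}, pin down the value $x=\tfrac{2i+1}{4}$ independently of that analysis, thereby re-deriving the formula.
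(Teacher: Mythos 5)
Your final formula and arithmetic are correct, and you correctly identify the typo ($\dd_{2i+2}$ should be $\dd_{2i+1}$), the role of the ``slight refinement'' (disjointness including the reducible members of the pencils, via Remark \ref{rmk:ns3} and Propositions \ref{int:empty1}, \ref{int:empty2}), and the fact that the higher boundary coefficients drop out. However, your logical architecture is different from, and somewhat weaker than, the paper's. You import the coefficients of $\lambda$, $\delta_0'$ and $\delta_0''$ from the Porteous/GRR degeneracy-locus computation of \cite{FL}, and then use the two pencils only to solve for the single remaining unknown $x$ (the $\delta_0^{\mathrm{ram}}$-coefficient), treating the second equation as a consistency check. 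The paper's point is the opposite: the two conditions $\Xi^{\mathrm{s}}\cdot[\dd_{2i+1}]=0$ and $\Xi^{\mathrm{ns}}\cdot[\dd_{2i+1}]=0$ are two \emph{linearly independent} conditions on the three-dimensional subspace $\langle\lambda,\delta_0',\delta_0^{\mathrm{ram}}\rangle$ of relevant coefficients (the determinant of the $(\lambda,\delta_0^{\mathrm{ram}})$-minor is $(2i+2)(-4)-(2i+1)(-8)=8i\neq 0$), hence they cut out a ray, which determines all three coefficients simultaneously up to a positive constant --- with no input from the Thom--Porteous calculation. This is exactly what makes the corollary a genuine \emph{rederivation} of the main result of \cite{FL} rather than a supplement to it, which is its stated purpose. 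One caveat your write-up glosses over, and which the paper concedes explicitly: since both pencils satisfy $\Xi\cdot\delta_0''=0$, neither method can see the $\delta_0''$-coefficient; the equality of the $\delta_0'$- and $\delta_0''$-coefficients in the displayed formula is not recoverable from the pencils and must be taken from \cite{FL} (the paper only claims to determine the class in the $\langle\lambda,\delta_0',\delta_0^{\mathrm{ram}}\rangle$-subspace). So your proposal proves the statement, but if you restructure it as two equations in three unknowns you obtain the cleaner, self-contained argument the corollary is actually advertising.
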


The class $[\dd_{2i+2}]$ has numerous desirable features, above all, its very low slope in the $(\lambda, \delta_0')$-plane of $\mbox{Pic}(\rr_{2i+1})$. As such,  the divisor $\dd_{2i+1}$ is used in the proof that $\rr_{g}$ is of general type for odd $g\geq 13$.

\vskip 5pt

\noindent {\bf Acknowledgments:} Farkas was supported by the Berlin Mathematics Research Center MATH+ and by the ERC Advanced Grant SYZYGY.
This project has received funding from the European Research Council (ERC) under the European Union Horizon 2020 research and innovation program (grant agreement No. 834172). Lelli-Chiesa was partially supported by PRIN2022 “Moduli spaces and Birational Geometry” and by the INdAM group GNSAGA.

\section{Secant divisors of general Prym-canonical curves}\label{sec:degprym}

For a smooth curve $C$ and an integer $e>0$, we denote by $C_e$ its $e$-th symmetric product. We recall the definition of the varieties of secant divisors to a linear system on a curve $C$. We fix  integers $0\leq f< e$, a line bundle $L\in \mbox{Pic}^d(C)$ and set $r:=h^0(C,L)-1$. Let us introduce the universal degree $e$ effective divisor $$\mathcal{Z}:=\Bigl\{(Z,x)\in C_e\times C: x\in \mathrm{supp}(Z)\Bigr\}$$ and let $\pi_1\colon C_e\times C\rightarrow C_e$ respectively 
$\pi_2\colon C_e\times C\rightarrow C$ be the two projections. We define the rank $e$ tautological bundle $E_L=L^{[e]}:=(\pi_1)_*\bigl(\pi_2^*(L)\otimes \OO_{\mathcal{Z}}\bigr)$ having fibres $E_{L|Z}\cong L_{D}=L\otimes \OO_Z$ over a point $Z\in C_e$ and consider the evaluation morphism 
$$\mathrm{ev}\colon H^0(C, L)\otimes \OO_{C_e}\longrightarrow E_L.$$ Then the variety of secant divisors $V_e^{e-f}(L)$ defined in (\ref{eq:sec_var1}) is  defined as the degeneracy locus of those divisors $Z\in C_e$ for which $\mbox{rk}\bigl(\mathrm{ev}_Z\bigr)\leq e-f$. The expected dimension $\mbox{exp. dim }  V_e^{e-f}(L)$ is then accordingly equal to $e-f(r+1-e+f)$.

\vskip 3pt

The study of the loci $V_e^{e-f}(\omega_C)$ is by definition equivalent to that of the Brill-Noether loci $W^r_d(C)$. For line bundles $L$ of large degree, the Green-Lazarsfeld \emph{Secant Conjecture} predicts a close relation between the non-emptiness of the varieties $V_e^{e-f}(L)$ and the non-vanishing of the Koszul cohomology groups $K_{p,q}(C,L)$ and we refer to \cite{FK} and \cite{F} for new developments and details. 

\vskip 4pt

We take a Prym curve $[C, \eta]\in \cR_g$ and let $L:=\omega_C\otimes \eta$ be the Prym-canonical linear system. We fix integers $0\leq f< e< g$. By Riemann-Roch, we then have the following equivalence 
$$V_e^{e-f}(\omega_C\otimes \eta)\neq \emptyset\Longleftrightarrow \eta\in C_e-W_e^{f-1}(C)\subseteq \mbox{Pic}^0(C).$$

\vskip 5pt

We prove Theorem \ref{thm:genpry} by using degeneration. First, we recall the notation for vanishing and ramification sequences of linear series on curves following \cite{EH}. If $\ell=(L,V)\in G^r_d(C)$ is a linear system on a smooth curve $C$, the \emph{ramification sequence} of $\ell$ at a point $q\in C$
\[
\alpha^{\ell}(q) : 0\leq \alpha_0^{\ell}(q) \leq \cdots \leq \alpha_r^{\ell}(q)\leq d-r
\]
is obtained from the \emph{vanishing sequence}
\[
a^{\ell}(q) : 0\leq a_0^{\ell}(q) < \cdots < a_r^{\ell}(q) \leq d
\]
by setting $\alpha^{\ell}_i(q) := a^{\ell}_i(q) - i$, for $i=0, \ldots, r$. The \emph{ramification weight} of $q$ with respect to $\ell$ is defined as the quantity $\mathrm{wt}^{\ell}(q) := \sum_{i=0}^r \alpha^{\ell}_i(q)$.

\vskip 4pt

A \emph{limit linear series} on a curve $C$  of compact type consists of a collection $$\ell=\Bigl\{\ell_Y=(L_Y, V_Y) \in G^r_d(Y): Y\mbox{ is a component of } C\Bigr\}$$ satisfying  compatibility conditions in terms of the vanishing sequences at the nodes of $C$ of the respective aspects, see \cite{EH}. We denote by $\overline{G}^r_d(C)$ the variety of limit linear series of type $g^r_d$ on $C$. More generally, if $q_1,  \ldots, q_s\in C_{\mathrm{req}}$ are smooth points and 
$$\alpha^i=\bigl(0\leq \alpha_0^i\leq \cdots \leq \alpha_r^i\leq d-r\bigr)$$ are \emph{Schubert indices}, we denote by $\overline{G}^r_d\bigl(C, (q_1,\alpha^1), \ldots, (q_s, \alpha^s)\bigr)$ the variety of limit linear series $\ell\in \overline{G}^r_d(C)$ satisfying the conditions $\alpha^{\ell}(q_i)\geq \alpha^i$ for $i=1, \ldots, s$. Each component of $\overline{G}^r_d\bigl(C, (q_1, \alpha^1), \ldots, (q_s, \alpha^s)\bigr)$ has dimension at least equal to $\rho(g,r,d)-\mbox{wt}(\alpha^1)-\cdots-\mbox{wt}(\alpha^s)$. 
Theorem 1.1 of \cite{EH} offers adequate sufficient conditions when the  actual dimension of the locus $\overline{G}^r_d\bigl(C, (q_1, \alpha^1), \ldots, (q_s, \alpha^s)\bigr)$ equals the expected dimension. 

\vskip 4pt

Our first result is the following:

\begin{theorem}\label{thm:0nonexistence}
Let $[C, \eta]\in \cR_g$ be a general Prym curve of genus $g$ and we fix integers $0\leq f<e<g$. If $e-f(g-1-e-f)<0$, then $V_e^{e-f}(\omega_C\otimes \eta)=\emptyset$.
\end{theorem}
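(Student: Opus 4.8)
The plan is to prove non-emptiness fails by specializing $[C,\eta]$ to a suitable reducible curve of compact type and invoking the limit linear series machinery of Eisenbud--Harris. Since $\eta$ is a $2$-torsion point, the natural degeneration is to a curve $C_0 = C' \cup_q E$ where $E$ is an elliptic curve meeting a general curve $C'$ of genus $g-1$ at one point $q$, and $\eta$ specializes to a $2$-torsion bundle supported on $E$ (nontrivial on $E$, trivial on $C'$). A cleaner choice, and the one I would pursue, is to degenerate all the way to a chain or to attach the elliptic tail carrying the torsion so that the Prym-canonical bundle $\omega_{C_0}\otimes\eta$ becomes a limit linear series whose $C'$-aspect is (close to) the canonical series of $C'$ with a prescribed ramification at $q$, and whose $E$-aspect is a $g^{g-1}_{2g-2}$ on $E$ twisted by the $2$-torsion. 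The point of using $2$-torsion rather than a generic line bundle is that the $E$-aspect of the limit series is forced to have a controlled vanishing sequence at $q$, which is what translates the Prym condition into a Schubert-index constraint.

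First I would reformulate the statement: by Riemann--Roch, $V_e^{e-f}(\omega_C\otimes\eta)\neq\emptyset$ is equivalent to the existence of a divisor $Z\in C_e$ with $h^0(C,\eta(Z-W))$ ... — more usefully, to $\eta \in C_e - W_e^{f-1}(C)$ as already noted in the excerpt, i.e. the existence of effective divisors $A,B$ of degree $e$ with $\eta\cong\OO_C(A-B)$ and $h^0(C,\OO_C(B))\geq f+1$. Equivalently, $V_e^{e-f}(\omega_C\otimes\eta)\ne\emptyset$ means $\omega_C\otimes\eta$ has an $e$-secant $(e-f-1)$-plane, i.e. there is a subpencil... the most convenient packaging is that there exists $L'\in W^{f-1}_e(C)$ with $\eta\otimes L' \in W_e^{?}$; but for the degeneration argument the right object to track is the limit linear series $\ell$ of type $g^{g-2}_{2g-2}$ with underlying... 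I will instead track the statement in the form: non-emptiness of $V_e^{e-f}(\omega_C\otimes\eta)$ forces, after specialization, a limit linear series on $C_0$ with too much ramification at $q$, contradicting a dimension count that uses the smoothness/dimension statement of \cite[Theorem 1.1]{EH} applied to the general curve $C'$ of genus $g-1$.

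The key steps, in order, are: (1) fix the degeneration $C_0 = C' \cup_q E$ with $[C',\text{general}]$, $q\in C'$ and $q\in E$ general points, and $\eta_0$ the $2$-torsion bundle trivial on $C'$ and of order $2$ on $E$; verify via Cornalba's compactification that $[C_0,\eta_0]$ is a point of $\rr_g$ and that $\cR_g$ is in the closure of... (it is, since $[C_0,\eta_0]$ deforms to smooth Prym curves). (2) Suppose for contradiction that $V_e^{e-f}(\omega_C\otimes\eta)\ne\emptyset$ for the general Prym curve; by semicontinuity / properness of the relative secant scheme over $\rr_g$ (the $V_e^{e-f}$ fit together in a family and the locus where they are nonempty is closed), deduce $V_e^{e-f}(\omega_{C_0}\otimes\eta_0)\ne\emptyset$ for our special curve, interpreted via limit linear series: there is a limit $g^{e-f-1}_e$, call it $\mathfrak{a}$, on $C_0$ computing the secant, together with a limit $g^{g-1-e+f... }$ — concretely the secant divisor $Z$ of degree $e$ distributes as $Z = Z_{C'}+Z_E$ with $\deg Z_{C'}=e', \deg Z_E = e-e'$, and the failure of imposing $f$ conditions on $|\omega_{C_0}\otimes\eta_0|$ refines into conditions on each aspect plus a matching condition at $q$. (3) Do the numerology: the contribution to the expected dimension on the $C'$ side is governed by $\rho(g-1,\cdot,\cdot)$ minus ramification weight at $q$, and the $E$ side, because $\eta_0|_E$ is $2$-torsion (hence $\omega_{C_0}\otimes\eta_0$ restricted to $E$ has a very rigid vanishing sequence at $q$ — an elliptic curve with a point and a fixed non-trivial degree-$0$ line bundle has no special behavior, so the $E$-aspect vanishing sequence at $q$ is forced to be the generic one shifted appropriately), contributes no extra freedom; combining, any component of $V_e^{e-f}(\omega_{C_0}\otimes\eta_0)$ would force a component of some $\overline{G}^{r'}_{d'}(C',(q,\alpha))$ of dimension exceeding the bound $\rho(g-1,r',d')-\mathrm{wt}(\alpha)$, which is $<0$ by the hypothesis $e-f(g-1-e-f)<0$, contradicting the fact that $C'$ is Brill--Noether--Petri general and such loci are empty when the expected dimension is negative (this is where the transition from a result on the general curve of genus $g$ to the general curve of genus $g-1$ occurs, and it is harmless because we are free to choose $C'$ general).

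The main obstacle I expect is step (3): bookkeeping the interaction of the $2$-torsion twist with the limit linear series formalism so that the Prym condition is correctly converted into a ramification constraint at $q$ on the $C'$-aspect without introducing extra moduli from the $E$-aspect. Concretely, one must show that for the elliptic aspect, the dimension of the family of effective divisors $Z_E$ of degree $e-e'$ on $E$ imposing at most $f''$ conditions on the restricted series $|(\omega_{C_0}\otimes\eta_0)|_E|$ — which is $|\OO_E((e'')q)\otimes\eta_0|_E|$ for appropriate $e''$ — equals exactly its expected value (since $E$ together with the $2$-torsion point behaves Brill--Noether generically, in fact $W^s_m$ on $E$ is smooth of the expected dimension for every $m,s$), so that all ``excess'' is pushed onto $C'$. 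Getting the Schubert index $\alpha$ at $q$ exactly right, so that $\rho(g-1,r',d')-\mathrm{wt}(\alpha) = e - f(g-1-e-f)$ (or is $\le$ it), is a finite but delicate computation; once it is in place, the inequality hypothesis immediately yields emptiness, because on a general curve of genus $g-1$ every $\overline{G}^{r'}_{d'}(C',(q,\alpha))$ with negative expected dimension is empty by \cite[Theorem 1.1]{EH}. One should also handle the alternative degeneration $Z_E = $ base divisor case (i.e. $e'' $ forcing $Z_E$ to contain $q$ or a multiple), but that only decreases dimensions and so does not affect the conclusion.
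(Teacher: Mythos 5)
Your overall strategy (degenerate, convert the secant condition into a limit linear series condition, and use the Eisenbud--Harris dimension theorem) is the right family of ideas, but the specific degeneration you choose cannot close the dimension count, and the step where you invoke \cite[Theorem 1.1]{EH} on $(C',q)$ does not apply to the object you actually produce. Via Riemann--Roch, $V_e^{e-f}(\omega_C\otimes\eta)\neq\emptyset$ means there is an effective divisor $Z\in C_e$ with $\eta(Z)\in W^{f-1}_e(C)$; the divisor $Z$ moves in an $e$-dimensional family, and the whole point of the count is that fixing the $2$-torsion class $\eta$ imposes $g$ conditions on the pair $(Z,\eta(Z))\in C_e\times W^{f-1}_e(C)$, giving expected dimension $e+\rho(g,f-1,e)-g=e-f(g-1-e+f)$. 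In your degeneration $C_0=C'\cup_q E$ only the single tail $E$ carries nontrivial torsion, so only one of those $g$ conditions is captured by a rigid elliptic aspect; the remaining conditions become the requirement that $\OO_{C'}(Z_{C'}+kq)$ lie in $W^{f-1}_e(C')$ with prescribed vanishing at $q$, where $Z_{C'}$ still moves. That is again a \emph{secant-variety} condition on $C'$ (a moving divisor imposing dependent conditions), not a condition of the form $\overline{G}^{r'}_{d'}(C',(q,\alpha))$ at a \emph{fixed} point, so \cite[Theorem 1.1]{EH} does not bound its dimension, and your step (3) never accounts for the $e'$ parameters of $Z_{C'}$. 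In effect you have reduced the genus-$g$ statement to the same kind of statement in genus $g-1$; that is the inductive step used in the proof of Theorem \ref{thm:genpry}, but it cannot by itself prove Theorem \ref{thm:0nonexistence}, which is precisely the base of that induction.

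The two missing ingredients, which is how the paper proceeds, are: (i) degenerate all the way to a flag curve $R\cup_{x_1}E_1\cup\cdots\cup_{x_g}E_g$ with a nontrivial $2$-torsion class on \emph{each} elliptic tail, so that every one of the $g$ tails contributes a genuine constraint --- the underlying bundle of the $E_j$-aspect is forced to be $\eta_j\bigl(Z_j+(e-\deg Z_j)x_j\bigr)$, turning that aspect into a Schubert condition in $\mathrm{Gr}(f,e)$ rather than a point of the full space $\overline{G}^{f-1}_e(E_j,\cdot)$; and (ii) a result controlling where the moving divisor $Z$ specializes: the paper uses \cite[Proposition 2.2]{F2} to arrange that $\mathrm{supp}(Z)$ lands on a connected subcurve of genus $m\leq e$, so that $Z$ contributes at most $m\leq e$ to the dimension rather than an uncontrolled amount. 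With both in place the total count becomes $m-f(g-1-e+f)\leq e-f(g-1-e+f)<0$, the desired contradiction. Without (ii) your bookkeeping has no way to absorb the $e$ parameters of $Z$, and without (i) the torsion hypothesis is used only once instead of $g$ times, so the count cannot reach a negative number.
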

\begin{proof}
Assume the conclusion to be false, in particular, for every $[C, \eta]\in \cR_g$, there exists $Z\in V_e^{e-f}(\omega_C\otimes \eta)$, therefore, by Riemann-Roch,  $\eta(Z)\in W^{f-1}_e(C)$. 

\vskip 3pt

We degenerate to nodal curves and fix general pointed elliptic tails $[E_j, x_j]\in \cM_{1,1}$ and non-trivial $2$-torsion points $\eta_j\in \mbox{Pic}^0(E_j)[2]$, for $j=1, \ldots, g$. We then consider the map 
\begin{equation}\label{eq:tilr}
j\colon \mm_{0,g}\longrightarrow \rr_g, \ \ [R, x_1, \ldots, x_g]\mapsto \bigl[C:=R\cup_{x_1} E_1\cup \ldots \cup_{x_g} E_g, \  \ \eta_{R}\cong \OO_R, \ \eta_{E_j}\cong \eta_j\bigr].
\end{equation}

By assumption, on each curve $C$ as above, after possibly inserting chains of smooth rational curves at the points $x_1, \ldots, x_g$, or at the nodes of $R$, there exists a refined limit linear series $\ell\in \overline{G}^{f-1}_e(C)$ and an effective divisor $Z$ of total degree $e$ on $C$, such that if $Z_j\subseteq Z$ denotes the (possibly empty) subdivisor consisting of all points of $Z$ lying on $E_j$, then the $E_j$-aspect of $\ell_{E_j}$ has $\eta_j\bigl(Z_j+(e-\mbox{deg}(Z_j))\cdot x_j\bigr)\in \mbox{Pic}^e(E_j)$ as its underlying line bundle, for $j=1, \ldots, g$. 

\vskip 4pt

Applying \cite[Proposition 2.2]{F2}, it follows that there exists \emph{one} degeneration to a flag curve $C$ like in (\ref{eq:tilr}), such that $R=R_1\cup_p R_2$, with both $R_1$ and $R_2$ being trees of smooth rational curves meeting at the point $p$ and there exists $0\leq m\leq e$ such that $x_1, \ldots, x_m\in R_1\setminus \{p\}$ and $x_{m+1}, \ldots, x_g\in R_2\setminus \{p\}$, and, moreover, the divisor $Z$ lies on $R_1\cup_{x_1} E_1\cup \ldots \cup_{x_m} E_m$. In other words, one always finds a degeneration to a flag curve of genus $g$ such that the points in the support of the divisor $Z$ specialize to a subcurve of genus $m\leq e$.

\vskip 4pt

Let $\ell\in \overline{G}^{f-1}_e(C)$ be the corresponding limit linear series and we denote by
$$\alpha^j=\alpha^{\ell_{E_j}}(x_j)=\bigl(\alpha^j_0\leq \cdots \leq \alpha_{f-1}^j\bigr)$$
the ramification sequence of $\ell_{E_j}$ of $\ell$, where $j=1, \ldots, g$. We denote by $\overline{\alpha}^j:=\alpha^{\ell_{R_1}}(x_j)$
the ramification sequence of the $R_1$-aspect of $\ell$ at $x_j$ for $j=1, \ldots, m$ and by $\overline{\alpha}^j:=\alpha^{\ell_{R_2}}(x_j)$  the ramification sequence of the $R_2$-aspect of $\ell$ at $x_j$ for $j=m+1, \ldots, g$  respectively. Note that 
\begin{equation}\label{eq:compat}
\mathrm{wt}(\alpha^j)+\mathrm{wt}(\overline{\alpha}^j)=f(e-f+1), \  \ \mbox{ for } \ j=1, \ldots, g.
\end{equation}

We  now estimate the dimension of the space of such limit linear series $\ell\in \overline{G}^{f-1}_e(C)$. For $j=m+1, \ldots, g$, the line bundle underlying $\ell_{E_j}$ is equal to $\eta_j(e\cdot x_j)$ and $\ell_j$ is determined by the choice of an $f$-dimensional subspace of sections $V_j\subseteq H^0\bigl(E_j, \eta_j(e\cdot x_j)\bigr)$, satisfying the condition $\alpha^{\ell_{E_j}}(x_j)\geq \alpha^j$. This being a Schubert condition on the Grassmannian $\mathrm{Gr}(f,e)\cong \mathrm{Gr}\bigl(f, H^0(E_j, \eta_j(e\cdot x_j))\bigr)$, it follows that the aspect $\ell_{E_j}$ moves in a parameter space of dimension,  
\begin{equation}\label{eq:ej1}
\mbox{dim } \mbox{Gr}(f,e)-\mathrm{w}(\alpha^j)=f(e-f)-\mathrm{wt}(\alpha^j).
\end{equation}

\vskip 3pt

For $j=1, \ldots, m$, the aspect $\ell_{E_j}$ belongs to the moduli space $\overline{G}^{f-1}_e\bigl(Y_j, (x_j, \alpha^j)\bigr)$. In particular, $\ell_{E_j}$ moves in a moduli space of dimension at most
\begin{equation}\label{eq:ej2}
\rho(1, f-1,e)-\mbox{wt}(\alpha^j)=1+f(e-f)-\mbox{wt}(\alpha^j),    
\end{equation}
see also \cite[Theorem 1.1]{EH}. Furthermore, using once more \cite[Theorem 1.1]{EH}, the $R_1$-aspects of $\ell$ (that is, the collection of the aspect of $\ell$ corresponding to the components of the subcurve $R_1$ of $C$) move in a  space of dimension 
\begin{equation}\label{eq:ej3}
\mbox{dim } \overline{G}^{f-1}_e\Bigl(R_1, (x_1, \overline{\alpha}^1), \ldots, (x_m, \overline{\alpha}^m), (p, \alpha^{\ell_{R_1}}(p))\Bigr)=f(e+1-f)-\sum_{j=1}^m \mathrm{wt}(\overline{\alpha}^j)-\mathrm{wt}\bigl(\alpha^{\ell_{R_1}}(p)\bigr).
\end{equation}
Similarly, the $R_2$-aspect of $\ell$ move in a family of dimension 
\begin{equation}\label{eq:ej4}
\mbox{dim } \overline{G}^{f-1}_e\Bigl(R_2, (x_{m+1}, \overline{\alpha}^{m+1}), \ldots, (x_g, \overline{\alpha}^g), (p, \alpha^{\ell_{R_2}}(p))\Bigr)=f(e+1-f)-\sum_{j=m+1}^g \mathrm{wt}(\overline{\alpha}^j)-\mathrm{wt}\bigl(\alpha^{\ell_{R_2}}(p)\bigr).
\end{equation}
Adding the dimensions in (\ref{eq:ej1}), (\ref{eq:ej2}), (\ref{eq:ej3}) and (\ref{eq:ej4}), and using also (\ref{eq:compat}), as well as the compatibility relation $\mathrm{wt}\bigl(\alpha^{\ell_{R_1}}(p)\bigr)+\mathrm{wt}\bigl(\alpha^{\ell_{R_2}}(p)\bigr)=f(e-f+1)$,  the limit linear series $\ell\in \overline{G}^{f-1}_e(C)$ moves in a family of dimension at most equal to
\begin{align*}
m+mf(e-f)-\sum_{j=1}^m \mathrm{wt}(\alpha^j)+(g-m)f(e-f)-\sum_{j=m+1}^g \mathrm{wt}(\alpha^j)\\
+f(e+1-f)+f(e+1-f)-\sum_{j=1}^{g}\mathrm{wt}(\overline{\alpha}^j)-f(e+1-f)\\
=m-f(g-1-e+f)\leq e-f(g-1-e+f).
\end{align*}
In particular, if $e-f(g-1-e+f)<0$, no such limit linear series $\ell$ can exist which finishes the proof.
\end{proof}

\vskip 5pt

We are now prepared to prove Theorem \ref{thm:genpry}.

\vskip 6pt

\noindent \emph{Proof of Theorem \ref{thm:genpry}}. We assume by contradiction that for every Prym curve $[C, \eta]\in \cR_g$, there exists a component of $V_e^{e-f}(\omega_C\otimes \eta)$ having dimension at least $e-f(g-1-e+f)+1$. 

\vskip 4pt

We degenerate $C$ to a curve $C_0$ of compact type $Y\cup_p E$, where both $[Y,p]\in \cM_{g-1,1}$ and $[E,p]\in \cM_{1,1}$ are general pointed curves of genera $g-1$ and $1$ respectively. We also fix a Prym structure $\eta$ on $C_0$, by choosing \emph{non-trivial} $2$-torsion points $\eta_Y\in \mbox{Pic}^0(Y)[2]$ and $\eta_E\in \mbox{Pic}^0(E)[2]$. By semicontinuity, there exists a family of dimension at least $e-f(g-1-e+f)+1$ of effective divisors $Z$ of degree $e$ on $C_0$, and such that for any line bundle $L_{C_0}\in \mbox{Pic}^e(C_0)$ which is obtained by a twist at $p$ of the line bundle $\eta(Z)$, one has 
$$h^0\bigl(C_0, L_{C_0}\bigr)\geq f.$$
By possibly having to insert chains of rational curves at $p$ and working with the resulting curve instead of $C_0$, we may also assume that the general such $Z$ does not contain $p$ in its support, that is, $\OO_{C_0}(Z)$ is a line bundle.

\vskip 4pt

\noindent {\bf Claim:}  One can find a degeneration $C_0=Y\cup_p E$ as above and such a divisor $Z$ on $C_0$ moving in a family of dimension larger than $e-f(g-1-e+f)$, such that $\mbox{supp}(Z)\subseteq Y$. 

\vskip 4pt

To that end, we invoke once more Proposition 2.2 from \cite{F2}. Precisely, we obtain that there exists a flag curve of genus $g$
$$C=R\cup_{x_1} E_1\cup \ldots \cup_{x_g} E_g$$
consisting of a tree $R$ of smooth rational curves meeting a fixed smooth elliptic curve $E_j$ at the point $x_j$, for $j=1, \ldots, g$, such that all  points of $Z$ specialize on a connected subcurve $Y_1$ of $C$ of arithmetic genus at most $e$ with $\bigl|Y_1\cap \overline{C \setminus Y_1}\bigr|=1$, see also the proof of \cite[Theorem 0.1]{F2}. Since we assumed that $e<g$, denoting by $x_1$ and $x_g$ the extremal points on $R$ among the set $\{x_1, \ldots, x_g\}$, it follows that no point in $\mbox{supp}(Z)$ specializes on at least one of the curves  $E_1$ or $E_g$, for instance on $E_1$. This proves our contention, by smoothing all the nodes of $C$ with the exception of $x_1$. 

\vskip 4pt

Having established the claim, we return to the curve $C_0=Y\cup_p E$ and to the divisor $Z\in Y_e$. We write down the Mayer-Vietoris sequence on $C_0$
$$0\longrightarrow H^0(C_0, \eta(Z))\longrightarrow H^0(Y, \eta_Y(Z))\oplus H^0(E, \eta_E)\longrightarrow \mathbb C_p\longrightarrow\cdots $$
to obtain that $h^0(Y, \eta_Y(Z-p))\geq f$, for a family $Z$ of effective divisors in $Y_e$ of dimension at least $e-f(g-1-e+f)+1$. The divisor $Y_{e-1}\cong p+Y_{e-1}\hookrightarrow Y_e$ being ample, see e.g. \cite[p. 310]{ACGH}, it follows that there exists a non-empty family of effective divisors $Z_1\in Y_{e-1}$ moving in a family of dimension at least $e-f(g-1-e+f)$ such that $h^0(Y, \eta_Y(Z_1))\geq f$, or equivalently by Riemann-Roch 
$$h^0\bigl(Y, \omega_Y\otimes \eta_Y(-Z_1)\bigr)\geq g-2-e+f.$$
We obtain that $\mbox{dim } V_{e-1}^{e-1-f}(\omega_Y\otimes \eta_Y)\geq e-1-f(g-1-e+f)$, that is, the secant locus $V_{e-1}^{e-1-f}(\omega_Y\otimes \eta_Y)$ has excessive dimension. By continuing the descending induction on $g$, we let $Y$ degenerate to a union of a curve of genus $g-2$ and an elliptic tail such that all the secant divisors specialize on the component of genus $g-2$ and at some point we obtain that for a general Prym curve $[Y_1, \eta_1]\in \cR_{g_1}$, for some $g_1\leq g$, satisfies $V_{e_1-1}^{e_1-f}(\omega_{Y_1}\otimes \eta_1)\neq \emptyset$, even though the expected dimension 
$e_1-f(g_1-e_1+f)$ is negative. This on the other hand contradicts Theorem \ref{thm:0nonexistence}, which finishes the proof.
\hfill $\Box$

\section{Prym-canonical secants via Nikulin surfaces}

In this Section we prove both Theorems \ref{thm:disjoint1} and \ref{thm:disjoint2} using Nikulin surfaces.  In order to obtain divisors on $\rr_g$ we focus on the case when the expected dimension of $V_e^{e-f}(\omega_C\otimes \eta)$ equals $-1$, that is, when
\begin{equation}\label{eq:expdim-1}
e-f(g-1-e+f)=-1.
\end{equation} 
In each such case when (\ref{eq:expdim-1}) is satisfied, we define the locus 
\begin{equation}\label{eq:secant-Prym-div}
\cD_g^f:=\Bigl\{[C, \eta]\in \cR_g: V_e^{e-f}(\omega_C\otimes \eta)\neq \emptyset\Bigr\}.
\end{equation}

As explained in the Introduction, we regard the loci $\cD_g^f$ as the Prym analogues of the Brill-Noether subvarieties $\cM_{g,d}^r:=\bigl\{[C]\in \cM_g: W^r_d(C)\neq \emptyset\bigr\}$ of the moduli space of curves. 
\vskip 4pt

When $f=1$, setting $e=i$, we obtain from (\ref{eq:expdim-1}) that $g=2i+1$ and $\cD_{2i+1}^1=\cD_{2i+1}$ is the Prym difference divisor considered in \cite{FL}. From our results, it will follow that $\cD_g^f$ is always an effective divisor on $\cR_g$ whenever (\ref{eq:expdim-1}) is satisfied. When, on the other hand, $e=g$, then via (\ref{eq:expdim-1}) we obtain $g=f^2$ and the condition defining the locus $\cD_g^f$ becomes  
$$h^0(C, A\otimes \eta)\neq 0,$$
for a linear system $A\in W^{f-1}_{f^2-1}(C)$. Note that the Brill-Noether number of such a linear system $A$ equals zero, therefore in this case $\cD_g^f$ can be regarded as a (global version of a) translate of a multi-theta theta divisor on the moduli space $\cR_g$.

\vskip 5pt

We fix a standard Nikulin surface $X$ of genus $g$ such that 
$\mbox{Pic}(X)\cong \mathfrak{N}\oplus_{\perp} \mathbb Z\cdot L$, where $L^2=2g-2$. Recall from the Introduction that $N_1, \ldots, N_8$ are the disjoint $(-2)$-curves on $X$ and that $L\cdot N_j=0$, for $j=1, \ldots, 8$. We have the following slight strengthening of Theorem \ref{thm:disjoint1} from the Introduction.

\begin{theorem}\label{thm:non-existence}
For every standard Nikulin surface $X$ of genus $g\geq 6$ with $\mathrm{Pic}(X)\cong \mathfrak{N}\oplus_{\perp} \mathbb Z\cdot L$ and for integers $0\leq f\leq e$. If $e-f(r+1-e+f)<0$, then  $V_e^{e-f}(\omega_C\otimes \mathfrak{e}_C)=\emptyset$, for every smooth curve $C\in |L|$. 
\end{theorem}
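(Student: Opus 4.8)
The plan is to run a Prym analogue of Lazarsfeld's Lazarsfeld--Mukai bundle argument, the crucial point being that the K3 surface on which one argues is not $X$ itself (whose Picard lattice has rank $9$, so that curves on $X$ are \emph{not} Brill--Noether general) but the minimal K3 surface $Y$ attached to the Nikulin involution. Suppose for contradiction that some smooth $C\in|L|$ carries a divisor $Z\in V_e^{e-f}(\omega_C\otimes\mathfrak e_C)$ with $e-f(g-1-e+f)<0$; here $r=h^0(\omega_C\otimes\mathfrak e_C)-1=g-2$, since $\mathfrak e_C$ is a nontrivial $2$-torsion bundle. By Riemann--Roch this is equivalent to the existence of $A\in\mathrm{Pic}^e(C)$ with $h^0(C,A)\geq f$ and $h^0(C,A\otimes\mathfrak e_C)\geq 1$, i.e.\ $\mathfrak e_C\in C_e-W_e^{f-1}(C)$; a quick check shows the hypothesis forces $h^1(C,A)>0$, and after subtracting base points one may assume $A$ globally generated. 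Let $\widetilde C$ be the connected étale double cover of $C$ determined by $\mathfrak e_C$, realized as the preimage of $C$ under the Nikulin double cover $\widetilde X\to X$; since $C$ is disjoint from $N_1,\dots,N_8$, the curve $\widetilde C$ avoids the $(-1)$-curves over the $N_j$ and hence embeds, via the contraction $\widetilde X\to Y$, as a smooth curve $C'$ of genus $2g-1$ on $Y$, with $[C']=:L'$ and $L'^2=4g-4$. A lattice computation using $\mathrm{Pic}(X)\cong\mathfrak N\oplus_{\perp}\mathbb Z L$ gives $\mathrm{Pic}(Y)=\mathbb Z L'$, so Lazarsfeld's theorem \cite{La1} is available on $Y$. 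The pullback $A'$ of $A$ to $\widetilde C\cong C'$ has degree $2e$ and $h^0(C',A')=h^0(C,A)+h^0(C,A\otimes\mathfrak e_C)\geq f+1$, so it defines a $g^{n-1}_{2e}$ with $n\geq f+1$; I would then form its Lazarsfeld--Mukai bundle $E':=E_{C',A'}$ on $Y$, of rank $n$, with $c_1(E')=L'$, $c_2(E')=2e$ (base points of $A'$ only decrease $c_2$, which helps), and carrying a canonical linearization for the symplectic Nikulin involution $\iota\in\mathrm{Aut}(Y)$, since $C'$ and $A'$ are $\iota$-invariant.

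Next comes the numerical contradiction. Because $\mathrm{Pic}(Y)=\mathbb Z L'$, Lazarsfeld's analysis applies and yields, via the identity $\chi(E',E')=2-2\rho(2g-1,n-1,2e)$ together with the (semi)stability it establishes, the inequality $\rho(2g-1,n-1,2e)\geq 0$. When $f\leq 2$ this already suffices: a short computation from $e-f(g-1-e+f)<0$ gives $\rho(2g-1,n-1,2e)\leq\rho(2g-1,f,2e)<f(f-2)\leq 0$, a contradiction. When $f\geq 3$ one must exploit the $\iota$-equivariant structure of $E'$: by the holomorphic Lefschetz fixed-point formula at the $8$ fixed points $p$ of $\iota$ (at each of which $d\iota=-\mathrm{id}$), the $\iota$-invariant part of $\mathrm{Ext}^\bullet(E',E')$ has Euler characteristic $\tfrac14\sum_{p=1}^8 t_p^2$ with $t_p:=\mathrm{tr}(\iota|_{E'_p})\in\mathbb Z$ and $t_p\equiv n\pmod 2$; tracking the linearization through the resolution $X\to Y/\iota$ produces a torsion-free sheaf on $X$ whose Mukai vector has first Chern class in $L+\mathfrak N$ and second Chern number $e$, and the Mukai-pairing estimate for it on $X$ — where the relevant self-intersection is $L^2=2g-2$ rather than $4g-4$ and the second Chern number is $e$ rather than $2e$ — sharpens the bound to what a computation shows is equivalent to $\rho(g,n-1,e)\geq 0$. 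But $e-f(g-1-e+f)<0$ is exactly $e(f+1)<f(g+f-1)$, which forces $\rho(g,n-1,e)\leq\rho(g,f,e)<0$ for every $n\geq f+1$, using that $\rho$ is non-increasing in the rank variable throughout the relevant range. This is the contradiction.

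Finally, the non-simple case and the edges of the argument. If $E'$ fails to be (equivariantly) simple, I would take a destabilizing short exact sequence $0\to\mathcal S\to E'\to\mathcal Q\to0$ of $\iota$-equivariant torsion-free sheaves and analyze it lattice-theoretically: on $Y$ (resp.\ on $X$) the Picard lattice is $\mathbb Z L'$ (resp.\ $\mathbb Z L\oplus_{\perp}\mathfrak N$ with $L$ primitive and ample and $\mathfrak N$ negative definite), so the Chern classes of $\mathcal S,\mathcal Q$ are severely constrained; combining the Hodge index theorem, Clifford-type bounds, and the section estimate $h^0(E')\geq f+(g-1-e+f)$ one reduces either to a Prym--Lazarsfeld--Mukai sheaf of the same shape but smaller rank (contradicting the previous paragraph) or to a line bundle whose numerical data are incompatible with the lattice. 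I expect this non-simple analysis, together with making the descent of the equivariant bundle to $X$ fully rigorous — the bookkeeping of the traces $t_p$ through the $8$ fixed points, which is precisely what replaces Lazarsfeld's use of $\mathrm{Pic}=\mathbb Z H$ — to be the main obstacle. The two degenerate cases lie outside this scheme: $f=0$ is vacuous, since then $V_e^e(\omega_C\otimes\mathfrak e_C)=C_e$ and the inequality $e<0$ never holds; and $f=e$ reduces to the base-point-freeness of the Prym-canonical system $|\omega_C\otimes\mathfrak e_C|$, which holds for every smooth $C\in|L|$ once $g\geq 6$.
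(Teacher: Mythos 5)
Your strategy has a fatal flaw at its very foundation: the claim that $\mathrm{Pic}(Y)=\mathbb Z\cdot L'$ is false. The minimal K3 surface $Y$ obtained by contracting the $(-1)$-curves of $\widetilde X$ carries by construction the symplectic Nikulin involution $\iota$ with $8$ fixed points, and by Nikulin's theory (see \cite{vGS}) the anti-invariant part of $H^2(Y,\mathbb Z)$ under $\iota^*$ is a copy of $E_8(-2)$; since $\iota$ is symplectic this sublattice is orthogonal to the period and hence contained in $\mathrm{Pic}(Y)$. Thus $\rho(Y)\geq 9$ for \emph{every} such $Y$, Lazarsfeld's theorem in the form ``$\mathrm{Pic}=\mathbb Z\cdot H$'' is never available there, and the inequality $\rho(2g-1,n-1,2e)\geq 0$ that drives your $f\leq 2$ case does not follow. (Indeed, whether the covering curve $\widetilde C$ is Brill--Noether special is precisely the delicate question raised at the end of the paper in (\ref{eq:speculation}); it cannot be an input to the proof.) The $f\geq 3$ branch, resting on an equivariant holomorphic Lefschetz computation and a descent of the linearized Lazarsfeld--Mukai bundle to $X$ whose conclusion is only asserted (``a computation shows is equivalent to $\rho(g,n-1,e)\geq 0$''), is not a proof even granting the previous step; and the non-simple case is explicitly deferred. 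So the argument as proposed does not establish the theorem.

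For contrast, the paper's proof never leaves $X$ and needs no Lazarsfeld--Mukai bundle. Given $Z\in V_e^{e-f}(\omega_C\otimes\mathfrak e_C)$, the vanishing $H^i(X,\mathfrak e^{\vee})=0$ and the sequence $0\to\OO_X(-C)\to\I_{Z/X}\to\OO_C(-Z)\to 0$ twisted by $L\otimes\mathfrak e^{\vee}$ give $H^0(X,\I_{Z/X}\otimes L\otimes\mathfrak e^{\vee})\cong H^0(C,\omega_C\otimes\mathfrak e_C(-Z))$, so $Z$ lies on a smooth auxiliary curve $D\in|L-\mathfrak e|$ of genus $g-2$ (using very ampleness of $L-\mathfrak e$ for $g\geq 6$). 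A second cohomology sequence then yields $\OO_D(Z)\in W^f_e(D)$ with $\rho(g-2,f,e)=e-f(g-1-e+f)<0$, and this is excluded by Lazarsfeld's decomposition criterion applied directly to the class $L-\mathfrak e$ on $X$, where the rank-$9$ lattice $\mathfrak N\oplus_{\perp}\mathbb Z\cdot L$ is explicit enough to rule out any splitting $L-\mathfrak e=A_1+A_2$ with $h^0(A_i)\geq 2$. If you want to salvage your double-cover approach, the honest version of it would have to run Lazarsfeld's argument on $Y$ with the full lattice $E_8(-2)\oplus\mathbb Z\cdot L'$ (or its overlattice), which is a substantially harder decomposition analysis than the one the paper performs on $X$.
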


\begin{proof}
We fix a smooth curve $C\in |L|$ and set $\eta:=\mathfrak{e}_C^{\vee}$. Using \cite[p. 356]{ACGH},  since $\omega_C\otimes \eta$ is a non-special linear system, we obtain that $V_e^{e-f}(\omega_C\otimes \eta)\neq \emptyset$, whenever $e-f(g-1-e+f)\geq 0$. Therefore it suffices  to show that $V_e^{e-f}(\omega_C\otimes \eta)=\emptyset$, when $\mbox{exp.dim } V_e^{e-f}(\omega_C\otimes \eta)<0$.

\vskip 4pt

Suppose, by contradiction, that $Z=x_1+\cdots+x_e$ is an element of $V_e^{e-f}(\omega_C\otimes \eta)$. Note that we do not require the points $x_j$ to be distinct. By definition, $Z$ may be regarded as a length $e$ curvilinear subscheme of $X$. Then $h^0(C, \omega_C\otimes \eta(-Z))\geq h^0(C, \omega_C\otimes \eta)-e+f=g-1-e+f$. 

\vskip 4pt

Observe first that $H^i(X, \mathfrak{e})=0$ for $i=0,1,2$. By twisting the exact sequence $$0\longrightarrow \OO_X(-C) \longrightarrow \I_{Z/X}\longrightarrow \OO_C(-Z)\longrightarrow 0$$ by $L\otimes \mathfrak{e}^{\vee}$ and then taking cohomology, we obtain the exact sequence
$$0\longrightarrow H^0(X, \mathfrak{e}^{\vee})\longrightarrow H^0\bigl(X, \I_{Z/X}\otimes L\otimes \mathfrak{e}^{\vee}\bigr) \longrightarrow H^0\bigl(C, \omega_C\otimes \mathfrak{e}_C(-Z)\bigr)\longrightarrow H^1(X, \mathfrak{e}^{\vee})\longrightarrow \cdots,$$
which implies that
\begin{equation}\label{eq:isom3}
H^0\bigl(X, \I_{Z/X}\otimes L\otimes \mathfrak{e}^{\vee}\bigr)\cong H^0\bigl(C, \omega_C\otimes \eta(-Z)\bigr).
\end{equation}
We consider the class $H:=L\otimes \mathfrak{e}^{\vee}\in \mbox{Pic}(X)$. Note that $H^2=2g-6$, therefore curves $D\in |H|$ have arithmetic genus $g-2$. If $g\geq 6$, using \cite[Lemma 3.1]{GS} we know that the linear system $|H|$ is very ample. In particular, using (\ref{eq:isom3}), we can choose a smooth curve $D\in |H|$ passing through the points $x_1, \ldots, x_e$.

\vskip 4pt

We tensor the exact sequence $0\rightarrow \OO_X(-D)\rightarrow \I_{Z/X}\rightarrow \OO_D(-Z)\rightarrow 0$ by the line bundle $H$ and taking cohomology, we obtain the exact sequence
$$0\longrightarrow H^0(X, \OO_X)\longrightarrow H^0\bigl(X, \I_{Z/X}\otimes L\otimes \mathfrak{e}^{\vee}\bigr) \longrightarrow H^0\bigl(D, \omega_D(-Z)\bigr)\longrightarrow H^1(X, \OO_X)\longrightarrow \cdots,$$
from which we compute via the isomorphism (\ref{eq:isom3}) that $h^0(D, \omega_D(-Z))\geq g-2-e+f$.
Therefore, we can write
$$h^0\bigl(D, \OO_D(Z)\bigr)=h^0\bigl(D, \omega_D(-Z)\bigr)+e+1-g+2 \geq g-2-e+f+e-g+3=f+1,$$
that is $\OO_D(Z)\in W_e^f(D)$. Computing the Brill-Noether number for $\OO_D(Z)$, we observe 
$$\rho(g-2,f,e)=g-2-(f+1)(g-2-e+f)=e-f(g-1-e+f)=\mbox{ exp.dim } V_e^{e-f}(\omega_C\otimes \eta)<0,$$
that is, the curve $D$ is Brill-Noether special, precisely $W^f_e(D)\neq \emptyset$.

\vskip 4pt

We show that this is not possible. Indeed, using the arguments in Lazarsfeld's proof \cite[Lemma 1.3]{La1} of Petri's theorem for curves on surfaces $K3$ (see also \cite{La2}), to rule out this possibility, it suffices to show that there is no decomposition $L\otimes \mathfrak{e}^{\vee}=A_1+A_2\in \mbox{Pic}(X)$, where $h^0(X,A_i)\geq 2$ for $i=1,2$.

\vskip 4pt

Suppose there is such a decomposition and we write $A_1 \equiv a_1 L+b_1 N_1+\cdots +b_8 N_8$, respectively $A_2=a_2 L+c_1 N_1+\cdots +c_8 N_8$. Then $a_1$ and $a_2$ must be non-negative and since $a_1+a_2=1$, without loss of generality we may assume $a_1=0$, that is, $A_1\in \mathfrak{N}$. But then $|A_1|$ can consists only of sums of the rational curves $N_1, \ldots, N_8$, that is, $h^0(X,A_1)\leq 1$, which yields the desired contradiction. Therefore $V_e^{e-f}(\omega_C\otimes \eta)=\emptyset$, when $e-f(g-1-e+f)<0$.

\vskip 5pt

\end{proof}

\begin{remark}\label{rmk:1_nodal}
The conclusion of Theorem \ref{thm:non-existence} can be extended to any irreducible $1$-nodal curve $C\in |L|$, while keeping the same assumptions. Indeed, suppose $C$ is such a $1$-nodal curve with $\mbox{Sing}(C)=\{o\}$, and denote by $\nu\colon C'\rightarrow C$ the normalization map with $x,y\in C'$ such that $\nu(x)=\nu(y)=o$. Assuming $[C, \mathfrak{e}_C]$ is a limit of smooth Prym curves $[C_t, \eta_t]\in \cR_g$, with $V_e^{e-f}(\omega_{C_t}\otimes \eta_t)\neq \emptyset$, then if $Z$ is the limit on $C$ of the $e$-secant divisors on $C_t$, in the case $o \notin \mbox{supp}(Z)$ the proof remains unchanged.  Assume for simplicity that \emph{one} of the points in $\mbox{supp}(Z)$, say $x_1$, specializes to $o$, while $x_2, \ldots, x_e\in C_{\mathrm{reg}}$. (The case when several points in $\mbox{supp}(Z)$ specialize to $o$ is similar). We denote by $\epsilon\colon X'\rightarrow X$ the blow-up of $X$ at $o$, by $E$ the exceptional divisor and by $C'\subseteq X'$ the proper transform of $C$. Set $Y=\epsilon^*(C)=C'+E$.  Then  $C'\cap E=\{x,y\}$. We denote by $Z'=x_1+x_2+\cdots+x_e$ the limiting $e$-secant divisor, where $x_1\in E\setminus \{x,y\}$ and $x_2, \ldots, x_e\in C'\setminus \{x,y\}$. We then have 
$$H^0\bigl(X', \I_{Z'/X'}\otimes \epsilon^*(L\otimes \mathfrak{e}^{\vee})\bigr) \cong H^0\bigl(Y, \epsilon^*(L\otimes \mathfrak{e}^{\vee})(-Z')\bigr),$$
and pushing down to $X$, we obtain again that $h^0\bigl(X,\I_{Z/X}\otimes (L\otimes \mathfrak{e}^{\vee})\bigr)\geq g-1-e+f$. The rest of the proof is identical to that of Theorem \ref{thm:non-existence}.
    
\end{remark}

\begin{remark}\label{rmk:ns3}
It is natural to ask whether the first part of Theorem \ref{thm:non-existence} can be extended to reducible curves $C\in|L|$ containing one of the $N_i$, without loss of generality $N_1$. Thus we assume that $e-f(g-1-e+f)<0$ and  consider a curve $C=C'+N_1$, where $C'\in |L-N_1|$ is smooth and irreducible and denote by $x,y$ the two intersection points between $C'$ and $N_1$. The restrictions of $\eta$ to the two components of $C$ are given by $\eta_{C'}\cong \mathfrak{e}_{C'}^\vee$,  respectively $\eta_{N_1}\cong  \OO_{\PP^1}(1)$. 


Assume $[C, \eta]\in \dd_g^f$, which implies that there exists a divisor $Z\in V_e^{e-f}(\omega_C\otimes \eta)$. We denote by $Z'$ and $Z_1$ the restriction of $Z$ to $C'$ and $N_1$ respectively, and set $e':=\deg Z'$ and $e_1:=\deg Z_1$. Without loss of generality we may assume that $Z$ is disjoint from $x$ and $y$, so that $e=e'+e_1$. We distinguish two cases according to the parity of $e_1$.

\vskip 4pt

Assume first $e_1$ is even and write $e_1=2a$. By considering the twist by $\eta^{\otimes 2a}$ of the Prym-canonical line bundle, by semicontinuity we also have $Z\in V_e^{e-f}(\omega_C\otimes \eta^{\otimes (2a+1)})$. The restrictions of the line bundle $\omega_C\otimes \eta^{\otimes (2a+1)}(-Z)$ to the components $N_1$ respectively $C'$ are isomorphic to $\OO_{N_1}(1)$ respectively 
$\omega_{C'}\otimes \mathfrak{e}_{C'}^{\vee}(-Z_2)$, where $Z_2=Z'+(a-1)\cdot x+(a-1)\cdot y)$. Note that $\mbox{deg}(Z_2)=e-2$.

\vskip 4pt

The condition  $h^0\bigl(C, \omega_C\otimes \eta^{\otimes (2a+1)}(-Z)\bigr)\geq g-1-e+f$ translates then into 
\begin{equation}\label{eq:ram4}
h^0\bigl(C', \omega_{C'}\otimes \mathfrak{e}_{C'}^{\vee}(-Z_2)\bigr)\geq g-1-e+f\Longleftrightarrow h^0\bigl(X, \I_{Z_2/X}(C'\otimes \mathfrak{e}^{\vee})\bigr)\geq g-1-e+f.
\end{equation}

We take a general element $D$ of the linear system $\bigl|\I_{Z_2/X}(C'\otimes \mathfrak{e}^{\vee})\bigr|$. Observe that $D$ is a smooth curve of genus $g-4$ and proceeding precisely along  the lines of the proof of Theorem \ref{thm:non-existence}, we conclude from (\ref{eq:ram4}) that $h^0\bigl(D, \OO_D(Z_2)\bigr)\geq f+1$, that is, $W_{e-2}^f(D)\neq \emptyset$.
Observe now that the Brill-Noether number for such linear systems equals 
$$\rho(g-4,e-2,f)=e-f(g-1-e+f)-2=\mbox{exp.dim } V_e^{e-f}(\omega_C\otimes \eta) \leq -3,$$
that is, $D$ is a Brill-Noether special curve. This is ruled out like in the proof of Theorem \ref{thm:non-existence}.

\vskip 4pt

Assume now $e_1$ is odd and write $e_1=2a+1$. Then, also  $Z\in V_e^{e-f}(\omega_C\otimes \eta^{\otimes (2a+1)})$. The restrictions of $\omega_C\otimes \eta^{\otimes (2a+1)}(-Z)$ to $N_1$ respectively to $C'$ are isomorphic to $\OO_{N_1}$ respectively $\omega_{C'}\otimes \mathfrak{e}_{C'}^{\vee}\bigl(-
(Z'+(a-1)x+(a-1)y)\bigr)$. Setting $Z_3:=Z'+a\cdot x+a\cdot y$, we obtain that 
\begin{equation}\label{eq:ram5}
H^0\bigl(C', \omega_{C'}\otimes \mathfrak{e}_{C'}^{\vee}(-Z_3)\bigr)\geq g-e+f-2\Longleftrightarrow h^0\bigl(X, \I_{Z_3/X}(C'-\mathfrak{e})\bigr)\geq g-2-e+f.
\end{equation}

\vskip 4pt

Choosing  a general curve $D\in \bigl|\I_{Z_3/X}(C'\otimes \mathfrak{e}^{\vee})\bigr|$, we obtain via the equivlence (\ref{eq:ram5})  that $\OO_D(Z_3)\in W^f_{e-1}(D)$. 
We now compute the Brill-Noether number
$$\rho(g-4,f,e-1)=f-1+e-f(g-1+e+f)=f-1+\mbox{exp.dim } V_e^{e-f}(\omega_C\otimes \eta).$$
We conclude like at the end of the proof of Theorem \ref{thm:non-existence} that this situation does not appear, at least as long as $f=1$. We summarize that when $f=1$, the  curves $C'+N_1$ do not lie in $\dd_g^1$.

\end{remark}

\section{The Prym difference divisor via Raynaud theta divisors}

In this Section we present an alternative approach to both Theorem \ref{thm:disjoint1} (in the case $f=1$) and Theorem \ref{thm:disjoint2} (in full generality) using the representation of difference varieties on Jacobians as Raynaud theta divisors provided in \cite{FMP} in the context of the resolution of the Minimal Resolution Conjecture for points on canonical curves.

\vskip 5pt

We begin by setting some notation. For a smooth curve $C$ and for integers $a, b>0$, we denote by $C_a-C_b\subseteq \mbox{Pic}^{a-b}(C)$ its \emph{difference variety} defined as the image of the difference map 
$$\phi_{a,b}\colon C_a\times C_b\longrightarrow \mbox{Pic}^{a-b}(C), \ \ (D_a, D_b)\mapsto \OO_C(D_a-D_b).$$
Of particular interest is the case when $a=b=i$ and $g=2i+1$, in which case $C_i-C_i\subseteq \mbox{Pic}^0(C)$ is a divisor. Using \cite{FMP} this divisor can be identified with the \emph{Raynaud theta divisor} \cite{R} associated to the kernel (syzygy) vector bundle on $C$. Precisely, we let $M_{\omega_C}$ be the \emph{kernel bundle} defined by the exact sequence
\begin{equation}\label{eq:M-bundle}
0\longrightarrow M_{\omega_C}\longrightarrow H^0(C, \omega_C)\otimes \OO_C\longrightarrow \omega_C\longrightarrow 0.
\end{equation}
Note that the exact sequence (\ref{eq:M-bundle}) makes sense for every nodal curve $C$ for which $\omega_C$ is globally generated. We set $Q_{\omega_C}:=M_{\omega_C}^{\vee}$ and observe that the slope of $Q_{\omega_C}$ equals $\mu\bigl(Q_{\omega_C}\bigr)=2$. Therefore $\mu\bigl(\bigwedge^i Q_{\omega_C}\bigr)=2i=g-1$. The main results of \cite{FMP} establishes the following equality of cycles on $\mbox{Pic}^0(C)$:
\begin{equation}\label{eq:FMP}
C_i-C_i=\Theta_{\bigwedge^i Q_{\omega_C}}:=\Bigl\{\xi\in \mbox{Pic}^0(C): h^0\bigl(C, \bigwedge^i Q_{\omega_C}\otimes \xi\bigr)\geq 1\Bigr\}.
\end{equation}
It is advantageous to observe that, whereas the left-hand side of (\ref{eq:FMP}) is hard to understand for singular stable curves, the definition of the right-hand side can be extended to cover those stable curves $C$ for which $\omega_C$ is globally generated.

\vskip 4pt

We are now prepared to provide a second proof of the non-existence part of Theorem \ref{thm:disjoint1} in the case $f=1$. 

\begin{theorem}\label{thm:nikulin_disjoint1}
Let $[X, \ \mathfrak{N}\oplus_{\perp} \mathbb Z\cdot  L]\in \F_{2i+1}^{\mathfrak{N}}$ be a general standard Nikulin surface of genus $g=2i+1$. Then for every integral nodal curve $C\in |L|$, we have that $[C, \mathfrak{e}_C^{\vee}]\notin \overline{\cD}_{2i+1}$.
\end{theorem}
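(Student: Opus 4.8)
The plan is to read the statement off from the identification \eqref{eq:FMP} of the difference variety with a Raynaud theta divisor, after transporting everything to the Nikulin surface $X$, where it becomes a cohomology vanishing of the same nature as the one behind Theorem \ref{thm:non-existence}.

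\emph{Reduction.} On the open part $U\subseteq\overline{\cR}_{2i+1}$ of Prym curves whose dualizing sheaf is globally generated, the locus $\{[C,\eta]\colon h^0(C,\bigwedge^i Q_{\omega_C}\otimes\eta)\ge 1\}$ is closed, and by \eqref{eq:FMP} it contains $\cD_{2i+1}$, hence $\overline{\cD}_{2i+1}\cap U$ as well. For an integral nodal $C\in|L|$ one has $\omega_C=L|_C$ globally generated (since $L$ is), and $C$ is disjoint from $N_1,\dots,N_8$ (because $C\cdot N_j=L\cdot N_j=0$ and $C$ is irreducible and not one of the $N_j$), so $\mathfrak e_C^\vee$ is a nontrivial $2$-torsion class and $[C,\mathfrak e_C^\vee]\in U$. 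It therefore suffices to prove $h^0(C,\bigwedge^i Q_{\omega_C}\otimes\mathfrak e_C^\vee)=0$.

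\emph{Transfer to $X$.} Since $\det M_{\omega_C}=\omega_C^\vee$ and $\rank M_{\omega_C}=g-1=2i$, one has $\bigwedge^i Q_{\omega_C}\otimes\mathfrak e_C^\vee\cong\bigwedge^i M_{\omega_C}\otimes(L\otimes\mathfrak e^\vee)|_C$. Choose a general hyperplane $W\subset H^0(X,L)$ that is base-point-free on $X$ and does not contain the section cutting out $C$; then $M_{L,W}:=\ker\bigl(W\otimes\OO_X\twoheadrightarrow L\bigr)$ is a rank-$2i$ bundle on $X$, the restriction $W\to H^0(C,\omega_C)$ is an isomorphism identifying the evaluation maps, and hence $M_{L,W}|_C\cong M_{\omega_C}$, so that $\bigwedge^i Q_{\omega_C}\otimes\mathfrak e_C^\vee\cong\bigl(\bigwedge^i M_{L,W}\otimes L\otimes\mathfrak e^\vee\bigr)|_C$. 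Feeding this into $0\to L^\vee\to\OO_X\to\OO_C\to 0$, and using the Koszul resolution $0\to\bigwedge^i M_{L,W}\to\bigwedge^i W\otimes\OO_X\to\bigwedge^{i-1}W\otimes L\to\cdots\to L^i\to 0$ (exact because $W$ is base-point-free) together with $H^\bullet(X,\mathfrak e^\vee)=0$ and, on a general Nikulin surface, $H^1(X,L^a\otimes\mathfrak e^\vee)=0$ for $a\ge 1$, the desired vanishing reduces to a statement purely on $X$, namely the injectivity of the Koszul/multiplication map
$$\textstyle\bigwedge^i W\otimes H^0(X,L\otimes\mathfrak e^\vee)\longrightarrow\bigwedge^{i-1}W\otimes H^0(X,L^2\otimes\mathfrak e^\vee),$$
i.e. $H^0(X,\bigwedge^i M_{L,W}\otimes L\otimes\mathfrak e^\vee)=0$, together with the injectivity of the attendant connecting map $H^1(X,\bigwedge^i M_{L,W}\otimes\mathfrak e^\vee)\to H^1(X,\bigwedge^i M_{L,W}\otimes L\otimes\mathfrak e^\vee)$ induced by the section of $L$ vanishing on $C$.

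\emph{The vanishing, and the main difficulty.} This last point is the heart of the matter, and it is of exactly the kind settled by Lazarsfeld's proof of Petri's theorem for curves on $K3$ surfaces (\cite[Lemma 1.3]{La1}; see also \cite{La2}): a nonzero class would produce a length-$i$ subscheme of $X\subset\PP^g$ failing to impose independent conditions on $|L\otimes\mathfrak e^\vee|$ — very ample for $g\ge 6$ by \cite[Lemma 3.1]{GS} — hence a Brill--Noether special divisor inside a sublinear system of $|L\otimes\mathfrak e^\vee|$. Just as in the proof of Theorem \ref{thm:non-existence}, this cannot happen, because $\mathrm{Pic}(X)=\mathfrak N\oplus_\perp\ZZ L$ admits no decomposition $L\otimes\mathfrak e^\vee=A_1+A_2$ with $h^0(X,A_j)\ge 2$: writing $A_j=a_jL+\sum_k b_{jk}N_k$, nonnegativity of $a_1,a_2$ and $a_1+a_2=1$ force one summand into the Nikulin lattice $\mathfrak N$, all of whose effective classes are sums of the curves $N_k$ and thus have $h^0\le 1$. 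The real obstacle is therefore the bookkeeping of the second step — carrying the section of the Raynaud bundle on the possibly very singular curve $C$ faithfully over to this surface-level Koszul class — and the payoff of the Raynaud-theta-divisor formulation (the reason it works uniformly for every integral nodal $C\in|L|$, not only for smooth or $1$-nodal ones) is that it replaces any delicate analysis of secant divisors and of the difference variety $C_i-C_i$ on $C$ by the cohomology of a fixed bundle on the smooth surface $X$.
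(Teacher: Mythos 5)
Your opening reduction is exactly the paper's: by the FMP identification \eqref{eq:FMP} it suffices to prove $H^0\bigl(C,\bigwedge^i Q_{\omega_C}\otimes \mathfrak{e}_C\bigr)=0$ for the curves in question (and your remarks on global generation of $\omega_C$ and on $C$ being disjoint from the $N_j$ are fine). But from that point on your route diverges from the paper's and, as written, has a genuine gap precisely at the step you yourself flag as ``the heart of the matter.'' After transferring to the surface via $M_{L,W}$ (which is legitimate), you must prove two things: $H^0\bigl(X,\bigwedge^i M_{L,W}\otimes L\otimes\mathfrak{e}^{\vee}\bigr)=0$ and the injectivity of $H^1\bigl(X,\bigwedge^i M_{L,W}\otimes\mathfrak{e}^{\vee}\bigr)\rightarrow H^1\bigl(X,\bigwedge^i M_{L,W}\otimes L\otimes\mathfrak{e}^{\vee}\bigr)$. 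The second is not addressed at all. For the first, your claim that a nonzero section ``would produce a length-$i$ subscheme of $X$ failing to impose independent conditions on $|L\otimes\mathfrak{e}^{\vee}|$'' is unsupported: a section of $\bigwedge^i$ of a rank-$2i$ bundle on a surface does not degenerate along a length-$i$ scheme, and the passage from such a section to a secant scheme is exactly the hard direction of the FMP theorem (valid on curves, not established on $X$). As your own Koszul resolution shows, the vanishing you need is the injectivity of the middle Koszul differential $\bigwedge^i W\otimes H^0(L\otimes\mathfrak{e}^{\vee})\rightarrow\bigwedge^{i-1}W\otimes H^0(L^2\otimes\mathfrak{e}^{\vee})$ — a syzygy-type statement in the range of the Green--Lazarsfeld secant conjecture, which certainly does not follow from the elementary observation that $L\otimes\mathfrak{e}^{\vee}$ admits no decomposition $A_1+A_2$ with $h^0(A_j)\geq 2$. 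That lattice argument rules out Brill--Noether special curves $D\in|L\otimes\mathfrak{e}^{\vee}|$ once you are already in the setting of Theorem \ref{thm:non-existence}; it does not control Koszul cohomology of $L$ twisted by $\mathfrak{e}^{\vee}$.

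The paper avoids this difficulty entirely by a further degeneration: it specializes $X$ to a \emph{hyperelliptic} standard Nikulin surface $X'$ carrying an elliptic pencil $E$ with $E\cdot L=2$, so that every $C\in|L|$ is hyperelliptic, $M_{\omega_C}\cong\bigl(E_C^{\vee}\bigr)^{\oplus(g-1)}$ splits, and $\bigwedge^i Q_{\omega_C}$ becomes a direct sum of copies of $E_C^{\otimes i}$. The required vanishing then reduces to $H^0\bigl(X',\OO_{X'}(iE-\mathfrak{e})\bigr)=0$ and $H^1\bigl(X',\OO_{X'}(iE-\mathfrak{e}-C)\bigr)=0$, which are settled by explicit $(-2)$-class computations in the rank-$10$ lattice (Lemma \ref{lemma:vanhyp_1}); semicontinuity then gives the statement for the general $X$ and all of $|L|$. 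If you want to salvage your surface-level approach you would need an independent proof of the Koszul injectivity above (and of the $H^1$-injectivity), which is a substantially harder statement than anything your proposed lattice argument delivers.
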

\begin{proof}
Using \cite{vGS} we may assume that in this case $\mbox{Pic}(X)\cong \mathfrak{N}\oplus_{\perp} \mathbb Z\cdot L$. The line bundle $L$ is nef and curves $C\in |L|$ have genus $g$ and $C\cdot N_j=0$, for $j=1, \ldots, 8$.

\vskip 3pt

We choose a nodal integral curve $C\in |L|$ and recall that $Q_{\omega_C}=M_{\omega_C}^{\vee}$.  By requiring the $2$-torsion line bundle of a stable Prym curve to lie in the left hand side of $(\ref{eq:FMP})$, one obtains a codimension one subvariety of $\rr_{2i+1}$ that contains the closure $\dd_{2i+1}$ of $\cD_{2i+1}$, as well as possibly other boundary divisors of $\rr_{2i+1}$. Throughout this proof we set $\eta:=\mathfrak{e}_{C}\in \mbox{Pic}^0(C)[2]$. In order to conclude that $[C, \eta]\notin \dd_{2i+2}$, it suffices thus to show that 
\begin{equation}\label{eq:suff1}
H^0\bigl(C, \bigwedge^i Q_{\omega_C} \otimes \mathfrak{e}_C\bigr)=0.
\end{equation}
We now specialize $X$ to a \emph{hyperelliptic} Nikulin surface $X'$, that is, a $K3$ surface with  
$$\mbox{Pic}(X')\cong \mathfrak{N}\oplus_{\perp} \mathbb Z\cdot L\oplus \mathbb Z\cdot E,$$
where $E^2=0$, $E\cdot L=2$ and $E\cdot N_i=0$, for $i=1, \ldots, 8$. By the Torelli theorem for $K3$ surfaces, see e.g. \cite{Dol}, there exists a $K3$ surface $X'$ having this Picard lattice. Moreover, 
$[X', \mathfrak{N}\oplus_{\perp} \mathbb Z\cdot L]\in \F_g^{\mathfrak{N}}$, that is, $X'$ is a standard Nikulin surface of genus $g$. We may assume that both $C$ and $E$ are nef classes on $X'$. Since $|E|$ is an elliptic pencil and $C\cdot E=2$, it follows that any $C\in |L|$ is hyperelliptic. Furthermore, $\omega_C=E_C^{\otimes (g-1)}$  and from (\ref{eq:M-bundle}) we obtain that 
$M_{\omega_C}\cong \bigl(E_C^{\vee}\bigr)^{\oplus (g-1)}$ {\bf(cf. \cite[Proposition 3.5]{FMP})}, therefore $Q_{\omega_C} \cong E_C^{\oplus (g-1)}$. It follows that 
$$\bigwedge^i Q_{\omega_C}\cong \Bigl(E_C^{\otimes i}\Bigr)^{\oplus {g-1\choose i}},$$
therefore condition (\ref{eq:suff1}), amounts to 
\begin{equation}\label{eq:suff2}
H^0\bigl(C, E_C^{\otimes i}\otimes \mathfrak{e}_C\bigr)=0.
\end{equation}
By tensoring the exact sequence $0\rightarrow \OO_{X'}(-C)\rightarrow \OO_{X'}\rightarrow \OO_C\rightarrow 0$ by  $\OO_{X'}(iE-\mathfrak{e})$ and taking cohomology, then (\ref{eq:suff2}) follows once we show that $$H^0\bigl(X', \OO_{X'}(iE-\mathfrak{e})\bigr)=0 \  \mbox{ and  } \ H^1\bigl(X', \OO_{X'}(iE-\mathfrak{e}-C)\bigr)=0.$$ 

The first statement follows from Lemma 5.8 in \cite{FK} (note that the assumption $g\geq 11$ in \emph{loc.cit.} is not used anywhere in the proof). For the second statement, since $(iE-\mathfrak{e}-C)^2=-4$, while clearly $H^0\bigl(X', \OO_{X'}(iE-\mathfrak{e}-C)\bigr)=0$ (intersect with the nef class $E$), it follows by Riemann-Roch coupled with Serre duality that 
$$h^1\bigl(X', \OO_{X'}(iE-C-\mathfrak{e})\bigr)=h^2\bigl(X', \OO_{X'}(iE-C-\mathfrak{e})\bigr)=h^0\bigl(X', \OO_{X'}(C+\mathfrak{e}-iE)\bigr).$$ Thus we are left with proving that $H^0\bigl(X', \OO_{X'}(C+\mathfrak{e}-iE)\bigr)=0$. Assuming by contradiction that $C+\mathfrak{e}-iE$ is effective, by intersecting with $N_i$, since $N_i\cdot (C+\mathfrak{e}-iE)=-1$, we obtain that also $C+\mathfrak{e}-iE-N_1-\cdots-N_8=C-\mathfrak{e}-iE$ is effective. Below we shall show that this is not possible, thus completing the proof.
\end{proof}

\begin{lemma}\label{lemma:vanhyp_1}
Let $X'$ be a general standard hyperelliptic Nikulin surface of genus $2i+1$ as above. Then 
$$H^0\bigl(X', \OO_{X'}(C-iE-\mathfrak{e})\bigr)=0.$$
\end{lemma}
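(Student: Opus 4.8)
The plan is to show that the class $D:=C-iE-\mathfrak{e}$ on the hyperelliptic Nikulin surface $X'$ cannot be effective by a direct analysis of the effective cone of $X'$, using the explicit description of $\mathrm{Pic}(X')\cong \mathbb{Z}\cdot L\oplus \mathbb{Z}\cdot E\oplus_{\perp}\mathfrak{N}$. The first step is to record the numerical data: $L^2=2g-2=4i$, $E^2=0$, $L\cdot E=2$, $E\cdot N_j=L\cdot N_j=0$, $N_j^2=-2$, $\mathfrak{e}^2=-4$, $\mathfrak{e}\cdot N_j=1$ for all $j$, and $C\equiv L$. From this one computes $D^2=(C-iE-\mathfrak{e})^2=4i-2\cdot i\cdot 2-2\cdot i\cdot 0-0+4=4i-4i+4=4>0$ — wait, I should double-check by reworking it: $D^2 = L^2 - 2i(L\cdot E) - 2(L\cdot\mathfrak{e}) + i^2 E^2 + 2i(E\cdot\mathfrak{e}) + \mathfrak{e}^2 = 4i - 4i - 0 + 0 + 0 - 4 = -4$, so $D^2=-4$. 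Also $D\cdot E = L\cdot E - iE^2 - E\cdot\mathfrak{e} = 2-0-0 = 2>0$, and $D\cdot L = L^2 - i(L\cdot E) - L\cdot\mathfrak{e} = 4i - 2i - 0 = 2i > 0$, and $D\cdot N_j = L\cdot N_j - i(E\cdot N_j) - \mathfrak{e}\cdot N_j = 0-0-1 = -1$.

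Since $D^2=-4<-2$, the class $D$ is not the class of an irreducible curve; so if $|D|\neq\emptyset$, $D$ must decompose. The key step is then to argue via the negative intersection $D\cdot N_j=-1$: if $D$ were effective, then since $N_j$ is a $(-2)$-curve and $D\cdot N_j<0$, every $N_j$ is a fixed component of $|D|$, so $D-N_1-\cdots-N_8 = C-iE-\mathfrak{e}-2\mathfrak{e}+ (N_1+\cdots+N_8-2\mathfrak{e})$... more precisely, $D - \sum_j N_j = C - iE - \mathfrak{e} - \sum_j N_j$; using $\sum_j N_j \equiv 2\mathfrak{e}$ this equals $C-iE-3\mathfrak{e}$. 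Hmm, this is moving in the wrong direction. The cleaner route, matching the sketch in the proof of Theorem \ref{thm:nikulin_disjoint1}, is: reduce to showing $C-\mathfrak{e}-iE=C+\mathfrak{e}-iE-\sum N_j$ is not effective (already done in that proof), and then for the class $C-\mathfrak{e}-iE$ itself compute $(C-\mathfrak{e}-iE)^2 = 4i - 4i + 0 + 0 + 0 - 4 = -4$ again, with $(C-\mathfrak{e}-iE)\cdot N_j = 0 - 1 - 0 = -1$, and $(C-\mathfrak{e}-iE)\cdot E = 2$. So I would run the same fixed-component argument: stripping off $N_1,\ldots,N_8$ from an effective representative of $C-\mathfrak{e}-iE$ (forced since the intersection with each $N_j$ is negative) yields effectivity of $C-\mathfrak{e}-iE-\sum_j N_j \equiv C-3\mathfrak{e}-iE$, whose square is $4i-4i-36 = -36 < 0$ and which has negative intersection $-3$ with each $N_j$; iterating drives the self-intersection and the $E$-degree in opposite incompatible directions, or one reaches a class with negative intersection against the nef class $E$ or $L$ while still supposedly effective, a contradiction.

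The cleanest finish, which I would actually write, is this: suppose $|C-\mathfrak{e}-iE|\neq\emptyset$ and pick an effective representative $\Gamma$. Since $\Gamma\cdot E=2$ and $E$ is nef with $E^2=0$, the component structure of $\Gamma$ is constrained; write $\Gamma=\Gamma_0+\sum_j a_j N_j$ with $\Gamma_0$ not containing any $N_j$. Then $a_j=-\Gamma\cdot N_j + \Gamma_0\cdot N_j \geq 1$ for each $j$ because $\Gamma_0\cdot N_j\geq 0$ and $\Gamma\cdot N_j=-1$... I need $\Gamma_0 \cdot N_j \geq 0$, true since $\Gamma_0$ has no $N_j$-component. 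Hence $\Gamma_0 \equiv C-\mathfrak{e}-iE-\sum_j a_j N_j$ with all $a_j\geq 1$ is effective, and $\Gamma_0\cdot E = 2$ still, while $\Gamma_0\cdot L = 2i - \sum a_j (N_j\cdot L) = 2i$. Now $\Gamma_0^2 = (C-\mathfrak{e}-iE)^2 - 2\sum_j a_j(C-\mathfrak{e}-iE)\cdot N_j + (\sum a_j N_j)^2 = -4 - 2\sum a_j(-1) - 2\sum a_j^2 = -4 + 2\sum a_j - 2\sum a_j^2 = -4 - 2\sum a_j(a_j-1) \leq -4$. An effective class $\Gamma_0$ with $\Gamma_0^2\leq -4$ and no $(-2)$-curve component of the $\mathfrak{N}$-part must itself decompose further, and a standard descending induction on $\Gamma_0\cdot L$ (a non-negative integer, unchanged by removing $N_j$'s but strictly dropping when we peel off an irreducible curve meeting $L$ positively) together with the fact that $L$ is big and nef on $X'$ forces a contradiction; alternatively, invoke that the only effective classes $F$ with $F\cdot L = 0$ are supported on $N_1,\ldots,N_8$ (since $L^\perp\cap \mathrm{Pic}(X') = \mathfrak{N}$ and $L$ is big and nef), contradicting $\Gamma_0\cdot L = 2i > 0$ combined with $\Gamma_0^2 < 0$ in the manner of \cite[Lemma 1.3]{La1}. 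The main obstacle is making this last "decompose and descend" step fully rigorous without circularity: one must be careful that peeling off components which meet $L$ trivially (the $N_j$'s) is already accounted for, and that every remaining irreducible component $\Gamma'$ of the decomposition has $\Gamma'\cdot L>0$, so that $\Gamma_0\cdot L=2i$ bounds the number of such components and hence bounds the decomposition, after which a finite check (using $E\cdot\Gamma_0=2$ and the Hodge index theorem on the rank-three lattice $\langle L,E\rangle\oplus_\perp \mathfrak{N}$) rules out every remaining possibility.
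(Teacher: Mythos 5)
Your proposal has a sign error that breaks its central mechanism. Since $\mathfrak{e}=\tfrac{1}{2}(N_1+\cdots+N_8)$ and the $N_j$ are disjoint with $N_j^2=-2$, one has $\mathfrak{e}\cdot N_j=-1$, not $+1$ as in your list of numerical data. Consequently $(C-iE-\mathfrak{e})\cdot N_j=-\mathfrak{e}\cdot N_j=+1>0$ for every $j$, so the $N_j$ are \emph{not} forced to be fixed components of the putative linear system, and the entire ``peel off $N_1,\ldots,N_8$ and descend'' strategy never gets started: with the correct sign your decomposition $\Gamma=\Gamma_0+\sum a_jN_j$ gives no information ($a_j=0$ is allowed), and the inequality $\Gamma_0^2\le -4$ with all $a_j\ge 1$ evaporates. (Your identity $C-\mathfrak{e}-iE=C+\mathfrak{e}-iE-\sum_jN_j$ is correct, and $B^2=-4$, $B\cdot E=2$, $B\cdot L=2i$ are correct; only the intersections with the $N_j$ and with $\mathfrak{e}$ are off.)

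Beyond the sign, the concluding ``decompose and descend, then a finite check rules out every remaining possibility'' is exactly the part that needs to be done, and you leave it as an admission rather than an argument. The actual proof is a finite lattice enumeration: if $B:=L-iE-\mathfrak{e}$ were effective, then since $B^2=-4<0$ some irreducible component $D$ of an effective representative satisfies $B\cdot D<0$, and such a $D$ is necessarily a $(-2)$-curve; one writes $D\equiv aL+bE+c_1N_1+\cdots+c_8N_8$ with $a,b\in\mathbb{Z}$ and the $c_j$ half-integers of equal parity (the Nikulin lattice constraint), and uses $B\cdot N_j=+1$ (hence $D\ne N_j$, hence $D\cdot N_j\ge 0$, i.e.\ $c_j\le 0$), nefness of $E$ and $L$ applied to both $D$ and $B-D$ to pin down $a\in\{0,1\}$ and then $b$, and $D^2=-2$ to force $c_1^2+\cdots+c_8^2=1$, i.e.\ a single $c_j=-1$; each resulting candidate $D$ yields a contradiction ($N_j-\mathfrak{e}$ effective, or $B\cdot D\ge 0$). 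Your Hodge-index/bounded-decomposition idea could in principle be made to work, but as written it is both unexecuted and anchored to the wrong intersection numbers.
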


\begin{proof}
We set $B\equiv L-iE-\mathfrak{e}\in \mbox{Pic}(X')$. Since $B^2=(C-iE)^2+\mathfrak{e}^2=-4$, if $B$ is effective, there must exist a $(-2)$-curve $D\subseteq X'$ such that $B-D$ is effective and $D\cdot B<0$. We write 
\begin{equation}\label{eq:D1}
D\equiv aL+bE+c_1 N_1+\cdots+c_8 N_8\in \mbox{Pic}(X'),
\end{equation}
where $a, b\in \mathbb Z$ and $c_j\in \frac{1}{2}\mathbb Z/\mathbb Z$.

\vskip 4pt
Since $B\cdot N_j=1$, it follows that $D\neq N_j$, therefore $D\cdot N_j\geq 0$, that is, $c_j\leq 0$, for $j=1, \ldots, 8$.
Intersecting (\ref{eq:D1}) with the nef class $E$, we obtain $a\geq 0$. On the other hand, also $(B-D)\cdot E\geq 0$, which yields $a\leq 1$, that is, $a\in \{0,1\}$, since from (\ref{eq:D1}) we have $a\in \mathbb Z$.

\vskip 3pt 

Assume first $a=1$. From the inequality $(B-D)\cdot C\geq 0$, we obtain  $i+b\leq 0$. On the other hand, then $D^2= (C+bE)^2-2(c_1^2+\cdots+c_8^2)\leq 4(i+b)$, which forces $b=-i$, since  $b\in \mathbb Z$. Since $D^2=-2$, from (\ref{eq:D1}) we obtain that $c_1^2+\cdots+c_8^2=1$.
Observe that since $c_1N_1+\cdots+c_8N_8\in \mathfrak{N}$, the integers $2c_1, \ldots, 2c_8$ are all of the same parity. Therefore we conclude that there exists $j\in \{1, \ldots, 8\}$ such that $c_j=-1$, while $c_{\ell}=0$, for $\ell \neq 8$. We obtain that the class $B-D=N_j-e$ is effective, which is obviously impossible.

\vskip 3pt

Assume now $a=0$. Since $D\cdot C\geq 0$, we obtain $b\geq 0$. Since $D^2=-2$, we again obtain $c_1^2+\cdots+c_8^2=1$, and the same reasoning as above implies that there exists $j\in \{1, \ldots, 8\}$ such that $c_j=-1$, while $c_{\ell}=0$, for $\ell \neq j$. We then have $0>B\cdot D=2b+1\geq 0$, therefore, we have reached a contradiction. 
\end{proof}

\subsection{Prym curves on non-standard Nikulin surfaces}

We start with a general non-standard Nikulin surface of genus $h=8i-1$, where $i\geq 2$. Then using \cite[Proposition 2.1]{vGS}, or \cite{KLV1}, there exist disjoint $(-2)$-curves $N_1, N_2\in \mathfrak{N}\subseteq \mbox{Pic}(X)$, such that if
$\mathfrak{e}\cong \frac{N_1+\cdots+N_8}{2}\in \mbox{Pic}(X)$, where $\mathfrak{N}=\langle N_1, \ldots, N_8, \mathfrak{e}\rangle$, then 
$$R\equiv \frac{L-N_1-N_2}{2}\in \mbox{Pic}(X).$$
We denote by $N_3, \ldots, N_8$ the remaining effective $(-2)$-curves in $\mbox{Pic}(X)$.

\vskip 4pt

Then $R^2=4i-2$, that is, curves $C\in |R|$ have genus $2i$. Note that $R\cdot N_1= R\cdot N_2=1$, while $L\cdot N_i=0$, for $i=3, \ldots, 8$. We fix an irreducible such curve $C$ and assume it to be smooth at the points $C\cdot N_1$ and $C\cdot N_2$. Retaining the notation from the Introduction, if $C\cdot N_i=\{x_i\}$, for $i=1,2$ and $f\colon \widetilde{X}\rightarrow X$ is the double cover induced by $e$, then $\mathfrak{e}_C^{2}\cong \OO_C(x_1+x_2)$, that is, the double cover $f_{|f^{-1}(C)}\colon f^{-1}(C)\rightarrow C$ is branched only over the points $x_1$ and $x_2$. We set 
$$Y:=C\cup_{\{x_1, x_2\}} \PP^1,$$ where $\PP^1$ meets $C$ transversally at $x_1$ and $x_2$ and we let $\eta$ be the line bundle on $Y$ having restrictions $\eta_{\PP^1}\cong \OO_{\PP^1}(1)$ and $\eta_{C}\cong \mathfrak{e}_C^{\vee}$. Then $\xi(C):=[Y, \eta]\in \Delta_0^{\mathrm{ram}}$ can be regarded as a stable Prym curve of genus $2i+1$. 

\vskip 3pt

We are now in a position to complete the proof of Theorem \ref{thm:disjoint2}

\vskip 5pt

\noindent \emph{Proof of Theorem \ref{thm:disjoint2}.} Retaining the notation  above,  in order to conclude that $[Y, \eta]\notin \dd_{2i+1}$, it suffices to show that 
\begin{equation}\label{eq:suff22}
H^0\bigl(Y,\bigwedge^i Q_{\omega_Y}\otimes \eta^{\vee}\bigr)=0,\,\,\,\,\textrm{or equivalently,}\,\,\,\, H^0\bigl(Y, \bigwedge^i M_{\omega_Y}\otimes \omega_Y\otimes \eta\bigr)=0.
\end{equation}
where the equivalence of the above vanishing statements follow from the fact that $Q_{\omega_Y}$ is a vector bundle of slope 
$\mu\bigl(Q_{\omega_Y}\bigr)=2$.

\vskip 3pt 

We have $M_{\omega_Y|C}\cong M_{\omega_C(x_1+x_2)}$ and
$M_{\omega_Y|\PP^1}\cong \OO_{\PP^1}^{\oplus 2i}$. One has the following Mayer-Vietoris type exact sequence on $Y$
$$
0\longrightarrow \bigwedge^i M_Y\longrightarrow \bigwedge^i M_{\omega_C(x_1+x_2)}\bigoplus \OO_{\PP^1}^{\oplus {2i\choose i}} \longrightarrow \bigwedge^i M_{\omega_Y|x_1+x_2}\longrightarrow 0.
$$
We twist this sequence by the line bundle $\omega_Y\otimes \eta$. Observing that 
$\bigl(\omega_Y\otimes \eta\bigr)_{|\PP^1}\cong \OO_{\PP^1}(1)$ and that $\bigl(\omega_Y\otimes \eta\bigr)_{|C}\cong \omega_C\otimes \mathfrak{e}_C$, as well as the existence of the isomorphism 
$$H^0\bigl(\PP^1, \OO_{\PP^1}(1)\bigr)^{\oplus {2i\choose i}}\stackrel{\cong}\longrightarrow \bigwedge^i M_{\omega_Y}\otimes (\omega_Y\otimes\eta)_{|x_1+x_2},$$
it follows that the statement (\ref{eq:suff22}) is equivalent to the following vanishing on $C$
\begin{equation}\label{eq:suff3}
H^0\bigl(C, \bigwedge^i M_{\omega_C(x_1+x_2)}\otimes \omega_C\otimes \mathfrak{e}_C\bigr)=0.
\end{equation}

We then have the following exact sequence on $C$ 
$$0\longrightarrow M_{\omega_C}\longrightarrow M_{\omega_C(x_1+x_2)}\longrightarrow \OO_C(-x_1-x_2)\longrightarrow 0,$$ which after taking exterior products leads to the following exact sequence
$$0\longrightarrow \bigwedge ^i M_{\omega_C}\longrightarrow \bigwedge ^i M_{\omega_C(x_1+x_2)}\longrightarrow \bigwedge^{i-1} M_{\omega_C}(-x_1-x_2)\longrightarrow 0.$$

Twisting this sequence by $\omega_C\otimes \mathfrak{e}_C$ and taking cohomology, we conclude that the vanishing (\ref{eq:suff3}) holds if  one has both  
\begin{equation}\label{eq:suff4}
H^0\bigl(C, \bigwedge^i M_{\omega_C}\otimes \omega_C\otimes \mathfrak{e}_C\bigr)=0  \ \  \mbox{ and } \ \ H^0\bigl(C, \bigwedge^{i-1} M_{\omega_C}\otimes \omega_C\otimes \mathfrak{e}_C^{\vee}\bigr)=0.
\end{equation}

\vskip 3pt

We specialize $X$ to a non-standard Nikulin surface $X'$ having the Picard lattice
$$\mbox{Pic}(X')\cong\mathfrak{N}\oplus \mathbb Z\cdot R\oplus \mathbb Z\cdot E,$$
where $E^2=0$, $E\cdot L=4$ and $E\cdot N_j=0$, for $j=1, \ldots, 8$. By \cite{Dol} it follows that a Nikulin surface $X'$ with this Picard lattice exists, furthermore both $R$ and $E$ are nef classes. Observe that $E\cdot R=2$, that is,  a curve $C\in |R|$ is hyperelliptic. Accordingly,  $M_{\omega_C}\cong \bigl(E_C^{\vee}\bigr)^{\oplus (2i-1)}$ and since $\omega_C\cong E_C^{\otimes (2i-1)}$, we conclude that (\ref{eq:suff4}) is equivalent to the following vanishing statements 
\begin{equation}\label{eq:suff5}
H^0\bigl(C, E_C^{\otimes (i-1)}\otimes \mathfrak{e}_C\bigr)=0 \ \ \mbox{ and } \ \ H^0\bigl(C, E_C^{\otimes i}\otimes \mathfrak{e}_C^{\vee}\bigr)=0.
\end{equation}
Twisting the exact sequence $0\rightarrow \OO_{X'}(-C)\rightarrow \OO_{X'}\rightarrow \OO_C\rightarrow 0$ by $\OO_{X'}(iE-\mathfrak{e})$ respectively by $\OO_{X'}((i-1)E+\mathfrak{e})$, we conclude that (\ref{eq:suff5}) are implied by the following vanishing statements:
\begin{equation}\label{eq:suff6}
H^0\bigl(X', \OO_{X'}((i-1)E+\mathfrak{e})\bigr)=0 \ \ \mbox{ and } \ \ H^0\bigl(X', \OO_{X'}(C-(i-1)E-\mathfrak{e})\bigr)=0,
\end{equation}
respectively
\begin{equation}\label{eq:suff7}
H^0\bigl(X', \OO_{X'}(iE-\mathfrak{e})\bigr)=0 \ \ \mbox{ and } \ \ H^0\bigl(X', \OO_{X'}(C-iE+\mathfrak{e})\bigr)=0.
\end{equation}
Statements (\ref{eq:suff6}) and (\ref{eq:suff7}) are established in the next lemma, bringing the proof to an end.
\hfill $\Box$

\begin{lemma}\label{ref:van_nik3}
Let $X'$ be a non-standard hyperelliptic Nikulin surface with $\mathrm{Pic}(X')\cong \mathbb Z\cdot R\oplus \mathfrak{N}\oplus \mathbb Z\cdot E$, where $R^2=4i-2$, $R\cdot E=2$ and $E\perp \mathfrak{N}$ as above. Then the following hold:
\begin{align*}
H^0\bigl(X', \OO_{X'}((i-1)E+\mathfrak{e})\bigr)=0 \ \ \mbox{ and } \ \ H^0\bigl(X', \OO_{X'}(iE-\mathfrak{e})\bigr)=0,\\
H^0\bigl(X', \OO_{X'}(R-(i-1)E-\mathfrak{e})\bigr)=0 \ \ \mbox{ and } \ \ H^0\bigl(X', \OO_{X'}(R-iE+\mathfrak{e})\bigr)=0.
\end{align*}
\end{lemma}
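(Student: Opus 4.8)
The plan is to establish the contrapositive of each of the four assertions, i.e.\ that none of the classes
$$B_1:=(i-1)E+\mathfrak{e},\quad B_2:=iE-\mathfrak{e},\quad B_3:=R-(i-1)E-\mathfrak{e},\quad B_4:=R-iE+\mathfrak{e}$$
is effective on $X'$. A direct computation with the intersection form gives $B_k^2=-4$ for $k=1,\ldots,4$, so one can argue as in the proof of Lemma~\ref{lemma:vanhyp_1}: if $B_k$ were effective, the negative part of its Zariski decomposition would contain an irreducible $(-2)$-curve $\Gamma$ with $B_k\cdot\Gamma<0$ and $B_k-\Gamma$ still effective. Writing $\Gamma\equiv a'R+b'E+c'_1N_1+\cdots+c'_8N_8$ with $a',b'\in\mathbb{Z}$ and the $c'_j\in\tfrac12\mathbb{Z}$ subject to the Nikulin-lattice constraint (either all $c'_j\in\mathbb{Z}$ or all $c'_j\in\tfrac12+\mathbb{Z}$), I would extract a contradiction from the nefness of $E$ and of $R$, together with $E^2=0$, $E\cdot R=2$, $E\perp\mathfrak{N}$, $R^2=4i-2$, $R\cdot N_1=R\cdot N_2=R\cdot\mathfrak{e}=1$, $R\cdot N_j=0$ for $j\ge 3$, $\mathfrak{e}\cdot N_j=-1$ and $\mathfrak{e}^2=-4$.

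The hinge is the auxiliary claim that $mE-\mathfrak{e}$ is \emph{not} effective for any $m\in\mathbb{Z}$. Assuming it is and extracting $\Gamma$ as above: since $(mE-\mathfrak{e})\cdot E=0$ and $E$ is nef, intersecting both $\Gamma$ and $(mE-\mathfrak{e})-\Gamma$ with $E$ forces $a'=0$; then $\Gamma^2=-2$ reads $(c'_1)^2+\cdots+(c'_8)^2=1$, and the parity constraint excludes the half-integral case, so $\Gamma=b'E\pm N_j$ for a single index $j$. The inequality $(mE-\mathfrak{e})\cdot\Gamma<0$ forces the minus sign; moreover $b'\ge 0$ since $R$ is nef and $b'\neq 0$ since $-N_j$ is not effective, so $b'\ge 1$. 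As $N_j$ is a fixed component of $|(mE-\mathfrak{e})-\Gamma|$, the class $(m-b')E-\mathfrak{e}$ is effective with $m-b'\le m-1$; this descent is impossible because $(mE-\mathfrak{e})\cdot R=2m-1<0$ once $m\le 0$ while $R$ is nef. The claim gives $H^0\bigl(X',\OO_{X'}(iE-\mathfrak{e})\bigr)=0$ immediately, and also $H^0\bigl(X',\OO_{X'}((i-1)E+\mathfrak{e})\bigr)=0$: since $B_1\cdot N_j=-1<0$ for all $j$, each $N_j$ is a fixed component of $|B_1|$, so $B_1-(N_1+\cdots+N_8)=(i-1)E-\mathfrak{e}$ would be effective, contradicting the claim.

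For $B_3$ and $B_4$ one has $B_k\cdot E=2$ (for $B_4$ after first peeling off $N_3,\ldots,N_8$, legitimate because $B_4\cdot N_j=-1<0$ for $j\ge 3$, which yields a class $B_4'$ with $(B_4')^2=-4$ and $B_4'\cdot E=2$), so intersecting $\Gamma$ and $B_k-\Gamma$ with the nef class $E$ now only gives $a'\in\{0,1\}$. The case $a'=0$ is ruled out as in the auxiliary claim, the resulting $\Gamma=b'E\pm N_j$ being incompatible with $R$ nef. In the case $a'=1$, one uses $\Gamma\cdot N_k\ge 0$ (as $\Gamma\neq N_k$) to get $c'_k\le\tfrac12$ for $k\in\{1,2\}$ and $c'_k\le 0$ for $k\ge 3$, then combines $\Gamma^2=-2$ with $(B_k-\Gamma)\cdot R\ge 0$ to obtain $(c'_1)^2+\cdots+(c'_8)^2\le 1$, and finally invokes the parity constraint to pin $\Gamma$ down to $R-iE$ or $R+(1-i)E-N_j$ with $j\in\{1,2\}$. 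For these, $B_3-\Gamma$ equals $E-\mathfrak{e}$ or $N_j-\mathfrak{e}$, while $B_4'-\Gamma$ equals $N_1+N_2-\mathfrak{e}$ or $2N_j+N_{j'}-E-\mathfrak{e}$ with $\{j,j'\}=\{1,2\}$. The class $E-\mathfrak{e}$ is not effective by the auxiliary claim; each of the others, after peeling off the curves among $N_1,N_2$ that it meets negatively, becomes $-\mathfrak{e}$ or $-E-\mathfrak{e}$, which is not effective because $\mathfrak{e}\cdot R=1>0$, $E\cdot R=2>0$ and $R$ is nef. This gives the four vanishings.

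The only genuine obstacle is bookkeeping in the case $a'=1$: there one must handle simultaneously the nefness of $E$, the nefness of $R$, the quadratic identity $\Gamma^2=-2$ and the Nikulin-lattice parity condition in order to reduce $\Gamma$ to the two classes above, and one must also verify that the descent in the auxiliary claim terminates. None of this goes beyond what is already present in the proof of Lemma~\ref{lemma:vanhyp_1}, so these are routine if somewhat lengthy verifications.
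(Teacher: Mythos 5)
Your proof is correct and follows essentially the same strategy as the paper's: each class has self-intersection $-4$, so effectivity would force an irreducible $(-2)$-curve $\Gamma$ with $B_k\cdot\Gamma<0$ and $B_k-\Gamma$ effective, and the nefness of $E$ and $R$ together with the parity constraints of the lattice $\mathbb Z\cdot R\oplus\mathfrak N\oplus\mathbb Z\cdot E$ rule out all candidates (the paper writes $\Gamma$ in the basis $L,E,N_j$ with $a\in\{0,\tfrac12\}$, which is your $a'\in\{0,1\}$ in the basis $R,E,N_j$, and its class $B=\tfrac{L}{2}-iE-\tfrac{N_3+\cdots+N_8}{2}$ is exactly your $B_4'$). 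The paper only writes out the fourth vanishing and declares the others similar, whereas you carry out all four; your auxiliary claim that $mE-\mathfrak e$ is never effective is a clean way to organize the first two cases and the endgames of the last two, and all the intersection-number bookkeeping I checked is accurate.
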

\begin{proof}
 Observe that each of the divisor classes appearing in the statement above has self-intersection $(-4)$ and the proofs are quite similar. We will provide details only for the last statement, that is, $H^0\bigl(X', \OO_{X'}(R-iE+\mathfrak{e})\bigr)=0$. 

 \vskip 4pt

 We proceed by contradiction and assume $H^0\bigl(X', \OO_{X'}(R-iE+\mathfrak{e})\bigr)\neq 0$. By intersecting with each of the curves $N_3, \ldots, N_8$ and recalling that $R\equiv 
 \frac{L-N_1-N_2}{2}$, we obtain that
$$H^0\Bigl(X', \OO_{X'}\Bigl(\frac{L}{2}-iE-\frac{N_3}{2}-\cdots-\frac{N_8}{2}\Bigr)\Bigr)\neq 0.$$
We set $$B:=\frac{L}{2}-iE- \frac{N_3}{2}-\cdots -\frac{N_8}{2}\in \mbox{Pic}(X').$$
Since $B^2= -4$, just like in the proof of Lemma \ref{lemma:vanhyp_1}, there exists an effective $(-2)$-curve 
\begin{equation}\label{eq:D2}
D\equiv aL+bE+c_1N_1+\cdots+c_8N_8\in \mbox{Pic}(X'),
\end{equation}
such that $B-D$ is effective and $B\cdot D<0$.

\vskip 4pt

Since $B\cdot N_j\geq 0$, it follows that $D\cdot N_j\geq 0$, therefore $c_j\leq 0$, for $j=1, \ldots, 8$. Furthermore, $a\geq 0$, since $D\cdot E\geq 0$. The class $E$ being nef and $B-D$ being effective, one has $(B-D)\cdot E\geq 0$, which yields $a\leq \frac{1}{2}$, that is, $a\in \{0, \frac{1}{2}\}$, since from (\ref{eq:D2}) it follows that $a\in \frac{1}{2}\mathbb Z/\mathbb Z$.

\vskip 4pt

Assume first $a=\frac{1}{2}$. From the inequality $(B-D)\cdot L\geq 0$, we obtain $i+b\leq 0$. Furthermore, we have 
$-2=D^2=4(b+i)-1-2(c_1^2+\cdots+c_8^2)$. Since $b\in \mathbb Z$, it follows $b+i=0$ and $c_1^2+\cdots+c_8^2=\frac{1}{2}$. The only solution of this equality compatible with the description (\ref{eq:D2}), is when $c_1=c_2=-\frac{1}{2}$ and $c_3=\cdots=c_8=0$. We obtain that  $\frac{1}{2}\bigl(N_1+N_2-N_3-\cdots-N_8\bigr)\in \mbox{Pic}(X')$ is effective, which is impossible.

\vskip 4pt

We are left with the case $a=0$. Since $D\cdot L\geq 0$, we obtain $b\geq 0$. From $D^2=-2$, we find $c_1^2+\cdots+c_8^2=1$. Note that the integers $2c_1$ and $2c_2$, respectively $2c_3, \ldots, 2c_8$ must have the same parity, which leaves as only cases possible $c_1=-1$ and $c_2=0$ (respectively $c_1=0$ and $c_2=-1$) and $c_3=\ldots=c_8=0$. In both cases we then compute that $D\cdot B=2b\geq 0$, which yields the desired contradiction.
\end{proof}

\section{The class of $\dd_{2i+1}$}

We now explain how the geometric information contained in Theorem \ref{thm:disjoint1} and \ref{thm:disjoint2} suffices to determine the class $[\dd_{2i+2}]$ at least in the $\bigl\langle \lambda, \delta_0', \delta_0^{\mathrm{ram}}\bigr\rangle$-subspace of $\mbox{Pic}(\rr_{2i+1})$. As explained in \cite{FL}, it is precisely these three coefficients of $[\dd_{2i+1}]$ that are relevant for Kodaira dimension calculations of $\rr_g$.

\vskip 3pt

Recall that if $\pi\colon \rr_g\rightarrow \mm_g$ is the finite morphism forgetting the Prym structure of every Prym curve, then we have a relation 
$$\pi^*(\delta_0)=\delta_0'+\delta_0''+2\delta_0^{\mathrm{ram}}\in \mbox{Pic}(\rr_{g}).$$

\vskip 3pt

We briefly describe the modular meaning of these divisor classes. If we  fix a point $[C_{xy}]\in \Delta_0$ induced by a smooth $2$-pointed curve $[C, x, y]$ of genus $g-1$ and the normalization map $\nu\colon C\rightarrow C_{xy}$, where $\nu(x)=\nu(y)$, a general point of the irreducible divisor $\Delta_0'$ (respectively of $\Delta_0''$) corresponds to a stable Prym curve $[C_{xy}, \eta]$, where $\eta\in \mathrm{Pic}^0(C_{xy})[2]$ and $\nu^*(\eta)\in \mathrm{Pic}^0(C)$ is non-trivial
(respectively trivial). A general point of $\Delta_{0}^{\mathrm{ram}}$ is the Prym curve $[Y, \eta]$, where $Y:=C\cup_{\{x, y\}} \PP^1$ is a quasi-stable curve  and  $\eta\in \mathrm{Pic}^0(Y)$ satisfies $\eta_{\PP^1}\cong \OO_{\PP^1}(1)$ and $\eta_C^{\otimes 2}\cong \OO_C(-x-y)$. 
Then $\delta_0^{'}:=[\Delta_0']$, $\delta_0^{''}:=[\Delta_0'']$, respectively
 $\delta_0^{\mathrm{ram}}:=[\Delta_0^{\mathrm{ram}}]$. For details we refer to \cite[1]{FL}.

\vskip 4pt

We now let $[X, \mathfrak{N}\oplus_{\perp} \mathbb Z\cdot L]\in \F_g^{\mathfrak{N}}$ be a general standard Nikulin surface of genus $g$. A Lefschetz pencil in the linear system $|L|$ on $X$ induces a family of stable Prym curves  $$\Xi^{\mathrm{s}}:=\Bigl\{[C_t, \mathfrak{e}_{C_t}^{\vee}]: t\in \PP^1\Bigr\}\subseteq \rr_g.$$ Observe that in this pencil there exists precisely one curve having $N_j$ as one of its components, for $j=1, \ldots, 8$. In this case, the corresponding curve in $\Xi^s$ looks like $C_j'+N_j$, where $C_j'$ is a smooth curve of genus $g-1$ and $C_j'\cdot N_j=2$. Furthermore, $\mathfrak{e}^{\vee}_{N_j}\cong \OO_{N_j}(1)$, while $\mbox{deg}\bigl(\mathfrak{e}^{\vee}_{C_j'}\bigr)=-1$. These eight points correspond to the intersection of $\Xi^s$ with the divisor $\Delta_0^{\mathrm{ram}}$. For a general choice of $\Xi^{\mathrm{s}}$, the intersection with $\Delta_0^{\mathrm{ram}}$ is transversal, therefore $\Xi^s\cdot \delta_0^{\mathrm{ram}}=8$, see also \cite[Proposition 1.4]{FV1} for details. Furthermore, $\Xi^s\cdot \lambda=g+1$, $\Xi^s\cdot \delta_0''=0$ and accordingly $$\Xi^s\cdot \delta_0'=\bigl(\pi_*(\Xi^{\mathrm{s}})\bigr)\cdot \delta_0-2\cdot 8=6g+18-2\cdot 8=6g+2.$$ 

\vskip 4pt

Let us now assume that $X$ is a non-standard Nikulin surface of genus $4g-5$ such that $\mbox{Pic}(X)\cong \mathbb Z\cdot R\oplus \mathfrak{N}$, where as above 
$$R\equiv \frac{L-N_1-N_2}{2}\in \mbox{Pic}(X).$$
We choose a Lefschetz pencil $\{C_t\}_{t\in \PP^1}$ in the linear system $|R|$, thus the genus of $C_t$ equals $g-1$. Let 
$\Xi\subseteq \mm_{g-1}$ be the induced curve in moduli. and we denote by $x_{j,t}$ the point of intersection of $C_t$ with $N_j$ for $j=1,2$. We denote by $\Xi^{\mathrm{ns}}\subseteq \Delta_0^{\mathrm{ram}}\subseteq \rr_g$ the family of stable Prym curves 
$$\Xi^{\mathrm{ns}}:=\Bigl\{\bigl[C_t\cup_{\{x_{1,t}, x_{2,t}\}} \PP^1, \eta_{C_t}=\mathfrak{e}_{C_t}^{\vee}, \ \eta_{\PP^1}\cong \OO_{\PP^1}(1)\bigr]:t\in \PP^1\Bigr\}.$$

The intersection numbers of $\Xi^{\mathrm{ns}}$ with the generators of $\mbox{Pic}(\rr_g)$ are as follows:

\vskip 4pt

\begin{proposition}\label{prop:intnumb}
One has $\Xi^{\mathrm{ns}}\cdot \lambda=g, \ \Xi^{\mathrm{ns}}\cdot \delta_0^{\mathrm{ram}}=4,  \
\Xi^{\mathrm{ns}}\cdot \delta_0''=0$ \ and \ $\Xi^{\mathrm{ns}}\cdot \delta_0'=6g$.
\end{proposition}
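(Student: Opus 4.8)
The plan is to resolve the Lefschetz pencil $|R|$ on $X$ into a fibration of genus $g-1$ curves, read off its singular fibres, transport the family to $\rr_g$ by gluing on the nodal $\PP^1$, and finally pin down $\delta_0^{\mathrm{ram}}$ via the relation $\pi^*\delta_0=\delta_0'+\delta_0''+2\delta_0^{\mathrm{ram}}$.

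First I would blow up the $R^2=2g-4$ base points of the general pencil, obtaining $\beta\colon\hat X\to X$ and a fibration $\phi\colon\hat X\to\PP^1$ whose fibres $C_t$ have arithmetic genus $g-1$. Since $R\cdot N_1=R\cdot N_2=1$, the curves $N_1,N_2$ are disjoint, and for a general pencil the base points avoid them, the proper transforms $\hat N_1,\hat N_2$ are disjoint sections of $\phi$ with $\hat N_j^2=-2$. I would then identify the singular fibres: a straightforward Euler characteristic count $e(\hat X)=24+(2g-4)$, together with the facts that $R\cdot N_j=0$ and $R-N_j$ is effective for $j=3,\dots,8$ (Riemann--Roch on the $K3$ surface, using that $R$ is big and nef, so $N_j-R$ is not effective) while $R-N_1,R-N_2$ are not effective, shows that a general pencil has exactly $6g$ irreducible $1$-nodal members and exactly one reducible member $C'_j+N_j$ for each $j\in\{3,\dots,8\}$, with $C'_j$ smooth of genus $g-2$ meeting $N_j$ transversally in $2$ points; higher-codimensional reducible loci in $|R|$ are missed by a general pencil. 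Throughout, $\Xi^{\mathrm{ns}}$ is realised over $\PP^1$ by gluing the trivial ruled surface $\PP^1\times\PP^1$ along two disjoint sections to $\hat N_1,\hat N_2\subseteq\hat X$, with $\eta$ of relative degree $1$ on the ruling and equal to $\mathfrak e_{C_t}^{\vee}$ on the fibres of $\phi$.

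For the Hodge class I would use that $\lambda$ is pulled back from $\mm_g$: contracting the $\PP^1$-tails identifies $\pi_*\Xi^{\mathrm{ns}}$ with the family $\{C_t/(x_{1,t}\sim x_{2,t})\}$, whose Hodge bundle is $\phi_*\bigl(\omega_{\hat X/\PP^1}(\hat N_1+\hat N_2)\bigr)$. Adjunction together with $\hat N_j^2=-2$ gives $\omega_{\hat X/\PP^1}(\hat N_1+\hat N_2)|_{\hat N_j}\cong\OO_{\PP^1}$, so pushing forward $0\to\omega_{\hat X/\PP^1}\to\omega_{\hat X/\PP^1}(\hat N_1+\hat N_2)\to\OO_{\hat N_1}\oplus\OO_{\hat N_2}\to0$ and using $R^1\phi_*\omega_{\hat X/\PP^1}\cong\OO_{\PP^1}$ and $R^1\phi_*\bigl(\omega_{\hat X/\PP^1}(\hat N_1+\hat N_2)\bigr)=0$ yields $\deg\phi_*\bigl(\omega_{\hat X/\PP^1}(\hat N_1+\hat N_2)\bigr)=\deg\phi_*\omega_{\hat X/\PP^1}=g$, the last equality from $\chi(\OO_{\hat X})=\chi(\OO_X)=2$; hence $\Xi^{\mathrm{ns}}\cdot\lambda=g$. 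For $\delta_0''$ I would observe that the only $\Delta_0$-type nodes in the family are the nodes of the $6g$ irreducible members, on whose normalisation $\eta$ restricts to a twist of $\mathfrak e_{C_t}^{\vee}$, which is non-trivial for a general Nikulin surface; and at each of the $6$ reducible fibres the component $N_j$ acquires $\eta|_{N_j}=\mathfrak e^{\vee}|_{N_j}=\OO_{\PP^1}(1)$ (since $\mathfrak e\cdot N_j=\tfrac12 N_j^2=-1$), so that point lies in a deeper stratum of $\overline{\Delta_0^{\mathrm{ram}}}$ and contributes neither to $\delta_0''$ nor to $\delta_0'$. This simultaneously gives $\Xi^{\mathrm{ns}}\cdot\delta_0''=0$ and $\Xi^{\mathrm{ns}}\cdot\delta_0'=6g$, the contribution of the $6g$ transverse $1$-nodal members.

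For $\delta_0^{\mathrm{ram}}$ I would exploit $\pi^*\delta_0=\delta_0'+\delta_0''+2\delta_0^{\mathrm{ram}}$, so it suffices to compute $\pi_*\Xi^{\mathrm{ns}}\cdot\delta_0$ on $\mm_g$. Writing $\Xi\subseteq\mm_{g-1,2}$ for the $2$-pointed family $(\hat X\to\PP^1,\hat N_1,\hat N_2)$ and $\xi\colon\mm_{g-1,2}\to\mm_g$ for the gluing morphism, one has $\xi^*\delta_0=-\psi_1-\psi_2+\delta_0$ (no further boundary terms, since $\Xi$ meets only the divisor of irreducible nodal $2$-pointed curves), where $\psi_j\cdot\Xi=\deg\bigl(\omega_{\hat X/\PP^1}|_{\hat N_j}\bigr)=2$ and $\delta_0\cdot\Xi=6g+12$, the $6$ reducible members each counting with multiplicity $2$. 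Thus $\pi_*\Xi^{\mathrm{ns}}\cdot\delta_0=(6g+12)-2-2=6g+8$, whence $2\,\Xi^{\mathrm{ns}}\cdot\delta_0^{\mathrm{ram}}=(6g+8)-6g-0=8$, i.e. $\Xi^{\mathrm{ns}}\cdot\delta_0^{\mathrm{ram}}=4$. The step I expect to be the main obstacle is the local analysis at the six reducible fibres: one must verify carefully that, after gluing on the tail, they give points of $\overline{\Delta_0^{\mathrm{ram}}}$ with $N_j$ as a second ramification $\PP^1$ rather than points of $\overline{\Delta_0'}$ or $\overline{\Delta_0''}$, and that they enter $\delta_0^{\mathrm{ram}}$, $\delta_0$ with the correct multiplicities; this involves tracking the factor-$2$ conventions in Cornalba's compactification and the multiplicity-$2$ degeneration (equivalently, the $A_1$-singularity produced by contracting the fibre component $\hat N_j$ with $\hat N_j^2=-2$). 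Checking that a general pencil is generic enough — base points off $N_1,N_2$, exactly six reducible members, transversality along $\Delta_0'$ — is routine $K3$ lattice bookkeeping.
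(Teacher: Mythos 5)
Your proof is correct and follows essentially the same route as the paper's: a Lefschetz pencil in $|R|$ with $6g+12$ nodes in total, six reducible members $C_j'+N_j$ ($j=3,\dots,8$) accounting for $12$ of them, hence $\Xi^{\mathrm{ns}}\cdot\delta_0'=6g$ and $\Xi^{\mathrm{ns}}\cdot\delta_0''=0$, and $\Xi^{\mathrm{ns}}\cdot\delta_0^{\mathrm{ram}}=4$ extracted from $\pi^*\delta_0=\delta_0'+\delta_0''+2\delta_0^{\mathrm{ram}}$ together with the correction $-\psi_1-\psi_2=\hat N_1^2+\hat N_2^2=-4$ coming from the constantly glued node (the paper writes this same computation as $6+\tfrac{1}{2}(N_1^2+N_2^2)$). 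One tiny slip: $R-N_1$ and $R-N_2$ \emph{are} effective for $i\geq 2$ (by Riemann--Roch, since $(R-N_1)^2=4i-6\geq 0$ and $(R-N_1)\cdot R>0$); the correct reason a general pencil contains no member through $N_1$ or $N_2$ is that $N_1+|R-N_1|$ has codimension two in $|R|$, which is all your argument needs.
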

\begin{proof}
Clearly $\Xi^{\mathrm{ns}}\cdot \lambda=\bigl(\Xi\cdot \lambda\bigr)_{\mm_{g-1}}=g$. Furthermore, $\bigl(\Xi\cdot \delta_0\bigr)_{\mm_{g-1}}=6(g+2)$. The pencil $\Xi$ contains \emph{six} reducible curves of type $N_j+C_j'$ for $j=3, \ldots, 8$, where $C_j'$ is a smooth curve of genus $g-2$ intersecting $N_j$ at two points and having no further intersection with the remaining curve $N_{\ell}$, with 
$\ell\neq j$. These six points will clearly contribute to the intersection $\Xi^{\mathrm{ns}}\cdot \delta_0^{\mathrm{ram}}$. Since $\Xi^{\mathrm{ns}}\cdot \delta_0''=0$, it follows that $\Xi^{\mathrm{ns}}\cdot \delta_0'=6(g+2)-2\cdot 6=6g$.  Finally, from the adjunction formula, we first observe that the contribution to the intersection number ${\pi_*(\Xi^{\mathrm{ns}})}\cdot \delta_0$ coming from $\delta_0^{\mathrm{ram}}$ equals $N_1^2+N_2^2+\#\bigl\{\mbox{nodes in } C_3'+N_3, \ldots, C_8'+N_8\bigr\} =-4+2\cdot 6$. Since $\pi^*(\delta_0)=2\delta_0^{\mathrm{ram}}+\cdots$, we then compute
$$\Xi^{\mathrm{ns}}\cdot\delta_0^{\mathrm{ram}}=6+\frac{1}{2}\bigl(N_1^2+N_2^2\bigr)=6-\frac{4}{2}=4,$$
which finishes the proof.
\end{proof}

We now explain how Theorem \ref{thm:disjoint1} and \ref{thm:disjoint2} determines the part of the class of the divisor $\dd_{2i+2}$ that is relevant to birational geometry applications for $\rr_{2i+1}$. This provides an alternative proof of Theorem 0.2 from \cite{FL}.

\begin{proposition}\label{int:empty1}
One has $\Xi^{\mathrm{s}}\cap \dd_{2i+1}=\emptyset$. Accordingly, also \ $\Xi^{\mathrm{s}}\cdot \dd_{2i+1}=0$.
\end{proposition}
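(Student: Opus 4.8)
The plan is to go through the members of the Lefschetz pencil $\Xi^{\mathrm{s}}$ one by one, to check with the results proved above that none of them represents a point of $\dd_{2i+1}$, and then to conclude: once $\Xi^{\mathrm{s}}\cap\dd_{2i+1}=\emptyset$ is known, the equality $\Xi^{\mathrm{s}}\cdot\dd_{2i+1}=0$ is immediate, since $\Xi^{\mathrm{s}}$ is a complete curve not contained in the effective divisor $\dd_{2i+1}$, so that $\OO_{\rr_{2i+1}}(\dd_{2i+1})$ pulls back to the trivial class on $\Xi^{\mathrm{s}}$.

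As recalled in the construction of $\Xi^{\mathrm{s}}$, on a general standard Nikulin surface $X$ with $\mathrm{Pic}(X)\cong\mathfrak{N}\oplus_{\perp}\ZZ\cdot L$ the members of a general pencil in $|L|$ are of exactly three kinds: (a) smooth integral curves $C\in|L|$; (b) finitely many integral curves $C\in|L|$ with a single node, accounting for $\Xi^{\mathrm{s}}\cdot\delta_0'=6g+2$; (c) the eight reducible curves $C_j'+N_j$ with $C_j'\in|L-N_j|$ smooth and $C_j'\cdot N_j=2$, for $j=1,\dots,8$, which are the points of $\Xi^{\mathrm{s}}\cap\Delta_0^{\mathrm{ram}}$ and account for $\Xi^{\mathrm{s}}\cdot\delta_0^{\mathrm{ram}}=8$. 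In particular $\Xi^{\mathrm{s}}$ has no member in $\Delta_0''$ and no member supported on two of the curves $N_j$; this uses the Picard lattice of $X$ (any splitting $L=D_1+D_2$ into non-zero effective classes has one summand in $\mathfrak{N}$, hence equal to $\sum_{j\in S}N_j$ for some $\emptyset\neq S\subseteq\{1,\dots,8\}$) together with the genericity of $X$ and of the pencil.

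I would then dispatch each kind. For a member of type (a), Theorem \ref{thm:disjoint1} (equivalently Theorem \ref{thm:non-existence} with $f=1$, $e=i$ and $g=2i+1$, so that $e-f(g-1-e+f)=-1<0$) gives $\mathfrak{e}_C\notin C_i-C_i$, hence $[C,\mathfrak{e}_C^{\vee}]\notin\cD_{2i+1}$; since $\dd_{2i+1}\cap\cR_{2i+1}=\cD_{2i+1}$, this smooth Prym curve lies outside $\dd_{2i+1}$. For a member of type (b), Theorem \ref{thm:nikulin_disjoint1} is already phrased for integral nodal $C\in|L|$ and gives $[C,\mathfrak{e}_C^{\vee}]\notin\dd_{2i+1}$ directly; alternatively Remark \ref{rmk:1_nodal} yields the same conclusion, since the limit on the nodal curve of a putative family of $i$-secant divisors on nearby smooth Prym curves would again force a Brill-Noether special smooth curve inside the very ample linear system $|L\otimes\mathfrak{e}^{\vee}|$ on $X$, which the proof of Theorem \ref{thm:non-existence} excludes. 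For a member of type (c), Remark \ref{rmk:ns3} in the case $f=1$, with $N_1$ replaced by $N_j$, shows that the stable Prym curve $[C_j'+N_j,\eta]$ with $\eta_{N_j}\cong\OO_{N_j}(1)$ and $\eta_{C_j'}\cong\mathfrak{e}_{C_j'}^{\vee}$ does not lie in $\dd_{2i+1}$. Combining (a), (b) and (c) gives $\Xi^{\mathrm{s}}\cap\dd_{2i+1}=\emptyset$, hence $\Xi^{\mathrm{s}}\cdot\dd_{2i+1}=0$.

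The step I expect to be the main obstacle is not any individual computation but the boundary bookkeeping: I must be sure that the eight reducible members $C_j'+N_j$, regarded as quasistable Prym curves in $\Delta_0^{\mathrm{ram}}$, are \emph{exactly} the curves treated in Remark \ref{rmk:ns3} (same dual graph, same line bundle $\eta$), and, above all, that the limiting-secant arguments of Remarks \ref{rmk:1_nodal} and \ref{rmk:ns3} genuinely exclude membership in the \emph{closure} $\dd_{2i+1}$, and not merely in the open locus $\cD_{2i+1}$; this rests on the properness of the relevant degeneracy loci over the base of the pencil. I also rely on the genericity hypotheses on $X$ and on the pencil to guarantee that no further degenerate fibre occurs, in particular none in $\Delta_0''$.
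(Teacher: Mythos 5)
Your proposal is correct and follows essentially the same route as the paper: the paper's proof likewise reduces to the statement that the integral members of the pencil are excluded by Theorem \ref{thm:disjoint1} (together with Theorem \ref{thm:nikulin_disjoint1} and Remark \ref{rmk:1_nodal} for the nodal ones), and that the eight reducible members $C_j'+N_j$ are excluded by Remark \ref{rmk:ns3} in the case $f=1$. Your extra bookkeeping on the possible fibre types of the Lefschetz pencil and the remark on passing from $\cD_{2i+1}$ to its closure make explicit what the paper leaves implicit, but introduce no new method.
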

\begin{proof}
We have already explained in the course of proving Theorem \ref{thm:disjoint1} that no irreducible Prym curve in the pencil $\Xi^{\mathrm{s}}$ lies in $\dd_{2i+1}$. It remains to show that also the reducible Prym curves $\bigl[C_j'+N_j, \ \mathfrak{e}_{C_j'}^{\vee}, \ \OO_{N_j}(1)\bigr]\in \Delta^{\mathrm{ram}}_0$ lie outside $\dd_{2i+1}$. But this is precisely the conclusion of Remark \ref{rmk:ns3}.    
\end{proof}

\begin{proposition}\label{int:empty2} One has $\Xi^{\mathrm{ns}}\cap \dd_{2i+1}=\emptyset$. Accordingly,  also 
\ $\Xi^{\mathrm{ns}}\cdot \dd_{2i+1}=0$.
\end{proposition}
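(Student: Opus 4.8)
The plan is to follow the proof of Proposition \ref{int:empty1} almost verbatim. Since $\Xi^{\mathrm{ns}}$ is the image under $\xi$ of a Lefschetz pencil $\{C_t\}\subseteq|R|$ on the non-standard Nikulin surface $X$, and Theorem \ref{thm:disjoint2} already shows $\xi(C_t)\notin\dd_{2i+1}$ for every \emph{irreducible nodal} member $C_t$, it remains only to exclude from $\dd_{2i+1}$ the finitely many reducible members of the pencil. As recorded in the proof of Proposition \ref{prop:intnumb}, these are the six curves $C_t=C_j'+N_j$, $j=3,\ldots,8$, where $C_j'\in|R-N_j|$ is smooth of genus $2i-1$, meets $N_j$ transversally at two points and meets each of $N_1,N_2$ at one point. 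Writing $p_1,p_2$ and $x_1,x_2$ for these four intersection points, the associated point of $\rr_{2i+1}$ is represented by the quasi-stable curve
$$Y_j:=C_j'\cup_{\{p_1,p_2\}} N_j\cup_{\{x_1,x_2\}}\PP^1,\qquad \eta_{N_j}\cong\OO_{N_j}(1),\quad\eta_{\PP^1}\cong\OO_{\PP^1}(1),\quad\eta_{C_j'}\cong\mathfrak{e}_{C_j'}^{\vee},$$
with $\mathfrak{e}_{C_j'}^{\otimes 2}\cong\OO_{C_j'}(p_1+p_2+x_1+x_2)$; its stable model is the irreducible $2$-nodal curve of genus $2i+1$ obtained by contracting $N_j$ and $\PP^1$. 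By the symmetry of $N_3,\ldots,N_8$ the six points are equivalent, so it is enough to handle one of them.

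For this I would supply the direct analogue of Remark \ref{rmk:ns3}. Assume $[Y_j,\eta]\in\dd_{2i+1}$; then there is a limiting degree-$i$ divisor $Z=Z_0+Z_1+Z_2$ on $Y_j$ (split according to $C_j',N_j,\PP^1$ and taken away from the nodes) so that, for a suitable twist at the nodes, $h^0\bigl(Y_j,\omega_{Y_j}\otimes\eta(-Z)\bigr)\ge i+1$. Exactly as in Remark \ref{rmk:ns3}, one twists by powers of $\eta$ to normalise the degrees of the restrictions to the two rational components $N_j$ and $\PP^1$, and restricting the resulting sections to $C_j'$ produces a lower bound on $h^0\bigl(C_j',\omega_{C_j'}\otimes\mathfrak{e}_{C_j'}^{\vee}(-Z_0')\bigr)$ of the type appearing in Remark \ref{rmk:ns3}, for an effective divisor $Z_0'$ on $C_j'$ of controlled degree; by the mechanism behind (\ref{eq:isom3}) this translates into $h^0\bigl(X,\I_{Z_0'/X}(B)\bigr)$ being large for an explicit class $B\in\mbox{Pic}(X)$ built out of $R-N_j$ and $\mathfrak{e}$. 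Passing to a general, hence smooth, curve $D\in|\I_{Z_0'/X}(B)|$ and computing $h^0(D,\OO_D(Z_0'))$ by Riemann--Roch on $D$, one gets a non-empty Brill--Noether locus $W^f_{\bullet}(D)$ with \emph{negative} Brill--Noether number; since $\mbox{Pic}(X)\cong\mathbb{Z}\cdot R\oplus\mathfrak{N}$, this is impossible by the $\mathfrak N$-argument at the end of the proof of Theorem \ref{thm:non-existence} (Lazarsfeld's Petri theorem for $K3$ surfaces, \cite[Lemma 1.3]{La1}). Alternatively one can run the Raynaud-theta argument of Theorems \ref{thm:nikulin_disjoint1} and \ref{thm:disjoint2}: since $\omega_{Y_j}$ is globally generated it suffices to prove $H^0\bigl(Y_j,\bigwedge^i Q_{\omega_{Y_j}}\otimes\eta\bigr)=0$, Mayer--Vietoris peeling off the two rational components reduces this, as in the proof of Theorem \ref{thm:disjoint2}, to vanishing statements on $C_j'$ in the spirit of (\ref{eq:suff4}), and after specialising $X$ to a hyperelliptic non-standard Nikulin surface these become vanishings of $H^0$ of explicit $(-4)$-classes on $X'$ built from $R-N_j$, $E$ and $\mathfrak{e}$, proved by the lattice bookkeeping of Lemmas \ref{lemma:vanhyp_1} and \ref{ref:van_nik3}.

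The only real difficulty is bureaucratic: one must organise the case analysis according to the parities of $\deg Z_1$ and $\deg Z_2$ (equivalently, deal with the residue constraints introduced by having \emph{two} exceptional $\PP^1$-components in the Mayer--Vietoris approach), which is exactly the kind of bookkeeping that already forced the two-case split in Remark \ref{rmk:ns3}; and, as there, the Petri/Lazarsfeld step is conclusive only for $f=1$, which is all that is needed here since $\dd_{2i+1}=\dd_{2i+1}^{1}$. Once all six reducible members are excluded, Theorem \ref{thm:disjoint2} yields $\Xi^{\mathrm{ns}}\cap\dd_{2i+1}=\emptyset$, whence $\Xi^{\mathrm{ns}}\cdot\dd_{2i+1}=0$.
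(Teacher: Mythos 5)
Your proposal is correct and follows the paper's own argument: the paper likewise disposes of the irreducible members via Theorem \ref{thm:disjoint2} and then handles the six reducible curves $C_j'+N_j$, $j=3,\ldots,8$, by exactly the two alternatives you describe (re-running the Raynaud-theta/lattice argument of Theorem \ref{thm:disjoint2} for these members, or carrying out the twisting-and-Petri steps of Remark \ref{rmk:ns3}). In fact you supply more detail on the reducible case than the paper, which leaves it as an inspection.
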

\begin{proof}
In the course of the proof of Theorem \ref{thm:disjoint2} we have established that no Prym curve $\xi(C)$ in $\Xi^{\mathrm{ns}}$, where $C$ is irreducible, lies in $\dd_{2i+1}$. It remains to extend this conclusion to the six reducible curves $C=C_j'+N_j$, where $C_j'\in |R-N_j|$ and $j=3, \ldots, 8$. This amounts either to an inspection that each step of the proof of Theorem \ref{thm:disjoint2} also works when $C=C_j+N_j$. Alternatively, one can carry out the steps in Remark \ref{rmk:ns3}. 
\end{proof}

\vskip 4pt

\begin{remark}
    We point out that Propositions \ref{int:empty1} and \ref{int:empty2} determine up to a constant the $\lambda, \delta_0'$ and $\delta_0^{\mathrm{ram}}$-coefficients of $[\dd_{2i+1}]$. Even though in this paper we stop short of computing the class of the diviors $\dd_g^f$, we point out that for $f\geq 2$ it is no longer the case $\Xi^{\mathrm{ns}}\cdot \dd^f_g=0.$ Indeed, setting $e=g$, then equation \ref{eq:expdim-1} implies $g=f^2$ and in this case $\cD_g^f$ is the locus of pairs $[C, \eta]\in \cR_g$ such that $h^0(C, A\otimes \eta)\neq 0$, for one of the linear systems $A\in W_{f^2-1}^{f-1}(C)$. The class of $[\dd_g^f]$ is computed in \cite[Theorem 0.4]{FL} and by direct calculation we observe that $\Xi^{\mathrm{ns}}\cdot \dd_g^f\neq 0$ (while, $\Xi^{\mathrm{s}}\cdot \dd_g^f=0$). Note that when $g=9$ and $f=3$, the divisor $\dd_9^3$ is instrumental in showing that $\rr_9$ in uniruled, see \cite{FV3}.
\end{remark}

\vskip 5pt

\subsection{Open questions} We end the paper with several questions. Using in an essential way the work of Arbarello-Bruno-Sernesi \cite{ABS}  describing certain rational surfaces of Du Val type (that is, blow-ups of $\PP^2$ at $9$ points) as limits of polarized $K3$ surfaces, explicit Brill-Noether general curves of every genus $g$ defined over $\mathbb Q$ have been constructed in \cite{ABFS}.

\begin{question}
Can one construct Prym-canonical curves $[C, \eta]\in \cR_g$ defined over $\mathbb Q$ that are Brill-Noether generic in Prym sense, that is, satisfy $$\mbox{dim } V_e^{e-f}(\omega_C\otimes \eta)=e-f(g-1-e+f)$$
for every $0\leq f<e< g$? Is there an analogue of \emph{Du Val} surfaces in Nikulin setting, that is, a class of explicit rational surfaces of degree $2g-2$ in $\PP^g$ that are limits of general polarized standard Nikulin surfaces $X\stackrel{|L|}\hookrightarrow \PP^g$? The same question can be asked for non-standard Nikulin surfaces.
\end{question}

\begin{question}
Because of the low slope of the $\lambda$ and $\delta_0'$-coefficients of the class $[\dd_{2i+1}]$, this divisor has been instrumental in showing that $\rr_g$ is of general type for odd $g\geq 13$, see \cite{Br}, \cite{FJP}, \cite{FL}. Is the class $[\dd_{2i+1}]$ an extremal point in the effective cone of divisors $\mbox{Eff}(\rr_{2i+1})$? If so, is there a (modular) birational model of $\rr_{2i+1}$ in which the divisor $\dd_{2i+1}$ gets contracted?    
\end{question}

\vskip 4pt

The last question is of a more speculative nature. Let us denote by $\chi \colon \rr_g\rightarrow \mm_{2g-1}$ the map assigning to a Prym curve $[C, \eta]$ the source curve $[\widetilde{C}]$ of the double cover $f\colon \widetilde{C}\rightarrow C$ induced by $\eta$. The pull-back map $\chi^* \colon \mbox{Pic}(\mm_{2g-1})\rightarrow \mbox{Pic}(\rr_g)$ has been described in \cite[Proposition 4.1]{FL}. Setting $g=2i+1$ and recalling that $\mm_{2g-1,g}^1$ is the corresponding Hurwitz divisor on $\mm_{2g-1}$ of curves having gonality at most $g$, putting together results from \cite{HM} and \cite[Theorem 0.2]{FL}, we observe that has the following equality of divisor classes on $\rr_{2i+1}$:
\begin{equation}\label{eq:srange}
\chi^*\Bigl([\mm_{2g-1,g}^1]\Bigr)-\frac{\binom{4i}{2i-1}}{\binom{2i}{i-1}}\cdot \frac{i-1}{4i-1}\cdot \pi^*\Bigl([\mm_{2i+1,i+1}^1]\Bigr)-\frac{\binom{4i}{2i-1}}{\binom{2i}{i}}\cdot \frac{3}{4i-1}\cdot \bigl[\dd_{2i+1}\bigr]\in \mathbb Q\Bigl\langle \delta_j, \delta_{j, g-j}: j\geq 1\Bigr\rangle.
\end{equation}
In other words, in the $\mathbb Q\langle \lambda, \delta_0', \delta_0'', \delta_0^{\mathrm{ram}}\rangle$-subspace of $\mbox{Pic}(\rr_{2i+1})$ an effective linear combination of $\pi^*(\mm_{2i+1,i+1})$ and $\dd_{2i+1}$ is linearly equivalent to the pullback of the Hurwitz divisor $\mm_{2g-1,g}^1$ under the map $\chi$. This invites the following question:

\begin{question}
Fix $i\geq 1$. Does one have a set-theoretic equality on $\cR_{2i+1}$
$$\chi^{-1}\bigl(\cM_{2g-1,g}^1\bigr)=\pi^{-1}(\cM_{2i+1,i+1}) \cup \cD_{2i+1}?$$
Equivalently, given a smooth curve $C$ of genus $g=2i+1$ and of maximal gonality $i+2$, for an \'etale double cover  $f\colon \widetilde{C}\rightarrow C$ such that $f_*\OO_{\widetilde{C}}\cong \OO_C\oplus \eta$, is the following equivalence true:
\begin{equation}\label{eq:speculation}
\mbox{gon}(\widetilde{C})\leq 2i+1\Longleftrightarrow \eta\in C_i-C_i?
\end{equation}
Note that one implication is obviously true. If $\eta\cong \OO_C(D-D')\in C_i-C_i$, with $D$ and $D'$ being effective divisors of degree $i$, then $h^0\bigl(\widetilde{C}, f^*(\OO_C(D))\bigr)=h^0(C, \OO_C(D))+h^0(C, \OO_C(D'))\geq 2$, therefore $\mbox{gon}(\widetilde{C})\leq 2i$. In particular, a positive answer to (\ref{eq:speculation}) would imply that $\mbox{gon}(\widetilde{C})$ always has to be even! Note that a positive answer to Question (\ref{eq:speculation}) for $g=3,5$ is provided in \cite[Theorems 5.1 and 5.2]{FL}. Other questions concerning the Brill-Noether theory of the curve $\widetilde{C}$ have been investigated in \cite{DLC}.
\end{question}

\end{document}